\numberwithin{equation}{section}
\theoremstyle{plain}
\newtheorem{theorem}{Theorem}[section]
\newtheorem{proposition}{Proposition}[section]
\newtheorem{lemma}[theorem]{Lemma}
\newtheorem{corollary}[theorem]{Corollary}
\newtheorem{definition}[theorem]{Definition}
\theoremstyle{remark}
\newtheorem{remark}[theorem]{Remark}
\renewcommand{\paragraph}[1]{{\textbf{#1}}}
\title{Exponential Tail Local Rademacher Complexity Risk Bounds Without
  the Bernstein Condition}
\author[1]{Varun Kanade}
\author[2]{Patrick Rebeschini}
\author[2]{Tomas Va\v{s}kevi\v{c}ius}
\affil[1]{Department of Computer Science, University of Oxford}
\affil[2]{Department of Statistics, University of Oxford}
\begin{document}

\maketitle

\begin{abstract}
The local Rademacher complexity framework is one of the most successful
general-purpose toolboxes for establishing sharp excess risk bounds for
statistical estimators based on the framework of empirical risk minimization.
Applying this toolbox typically requires using the Bernstein condition, which
often restricts applicability to convex and proper settings. Recent years have
witnessed several examples of problems where optimal statistical performance is
only achievable via non-convex and improper estimators originating from
aggregation theory, including the fundamental problem of model selection. These
examples are currently outside of the reach of the classical localization
theory.

In this work, we build upon the recent approach to localization via offset
Rademacher complexities, for which a general high-probability theory has yet to
be established. Our main result is an exponential-tail excess risk bound
expressed in terms of the offset Rademacher complexity that yields results at
least as sharp as those obtainable via the classical theory. However, our bound
applies under an estimator-dependent geometric condition (the "offset
condition") instead of the estimator-independent (but, in general,
distribution-dependent) Bernstein condition on which the classical theory
relies. Our results apply to improper prediction regimes not directly covered
by the classical theory.
\end{abstract}


\section{Introduction}
\label{sec:introduction}
We study the problem of obtaining performance estimates on a general class of
statistical prediction procedures.  Let $S_{n} = (X_{i}, Y_{i})_{i=1}^{n}$
denote an i.i.d.\ sample of input-output pairs $(X_{i}, Y_{i}) \in \mathcal{X}
\times \mathcal{Y}$ distributed according to some \emph{unknown} distribution
$P$.  A function mapping $\mathcal{X}$ to $\mathcal{Y}$ is called a
\emph{predictor}. A \emph{statistical estimator} is a procedure mapping the
observed random sample $S_{n}$ to some predictor $\widehat{f} =
\widehat{f}(S_{n}) \in \mathcal{F}$, where the class $\mathcal{F}$ is called
the \emph{range} of the estimator $\widehat{f}$.  In order to measure the
quality of an estimator $\widehat{f}$, we introduce a \emph{loss function}
$\ell : \mathcal{Y} \times \mathcal{Y} \to [0, \infty)$ and define the
performance measure called \emph{risk} as follows:
\begin{equation}
  \label{eq:risk-definition}
  R(\widehat{f}) = \E_{(X,Y) \sim P}[\ell(\widehat{f}(X), Y) \vert S_{n}].
\end{equation}
The above performance measure is absolute, and its scale depends on the
properties of the loss function $\ell$ as well as the distribution $P$.  In
order to obtain a performance measure whose value can approach zero as the
sample size $n$ increases, it is customary to introduce a class of
\emph{reference predictors} $\mathcal{G}$.  The risk incurred by the estimator
$\widehat{f}$, relative to the smallest risk achievable via predictors in class
$\mathcal{G}$, is called \emph{excess risk} and it is defined as
\begin{equation}
  \label{eq:excess-risk-definition}
  \mathcal{E}_{P}(\widehat{f}, \mathcal{G})
  = R(\widehat{f}) - \inf_{g \in \mathcal{G}} R(g).
\end{equation}
Observe that we have not imposed any restrictions on the distribution $P$,
other than constraining it to be supported on $\mathcal{X} \times \mathcal{Y}$.
Such a setting is called \emph{agnostic}, \emph{distribution-free} or
\emph{misspecified}, and it has been a central object of study in Statistical
Learning Theory since the early works of \citet*{vapnik1968uniform,
vapnik1971uniform, vapnik1974theory}.  This setup should be contrasted with the
\emph{well-specified} setting, where the reference class of functions
$\mathcal{G}$ is taken to be $\mathcal{F}$, the range of the estimator
$\widehat{f}$, and the observations are assumed to follow the distribution
$Y_{i} = f(X_{i}) + \xi_{i}$ for some $f \in \mathcal{F}$ and zero-mean noise
$\xi_{i}$. The present paper focuses on obtaining excess risk bounds that hold
for \emph{any} distribution $P$ supported on $\mathcal{X} \times \mathcal{Y}$; that
is, we study the distribution-free setting.

Upper bounds on the excess risk $\mathcal{E}_{P}(\widehat{f}, \mathcal{G})$ can
be obtained either in \emph{expectation} or in \emph{deviation}. The former
type of bounds aims to find the smallest remainder term $\Delta_{\mathbf{E}}(n,
\mathcal{G})$ that depends on properties of the estimator $\widehat{f}$ such as
its range $\mathcal{F}$ so that for some universal constant $c > 0$ the
following holds:
\begin{equation}
  \label{eq:expected-bounds}
  \E_{S_{n}}\big[\mathcal{E}_{P}(\widehat{f}, \mathcal{G})\big]
  \leq c \Delta_{\mathbf{E}}(n, \mathcal{G}).
\end{equation}

Similarly, bounds in deviation aim to find the smallest remainder term
$\Delta_{\mathbf{Pr}}$ that depends on properties of the estimator $\widehat{f}$ so that
the following holds for any $\delta \in (0,1]$:
\begin{equation}
  \label{eq:deviation-bounds}
  \P_{S_{n}}\big(
    \mathcal{E}_{P}(\widehat{f}, \mathcal{G})
    > c'\Delta_{\mathbf{Pr}}(n, \mathcal{G}, \delta)
  \big)
  \leq \delta,
\end{equation}
where $c' > 0$ is some universal constant. Observe that bounds of the above
type can be transformed to in-expectation bounds via tail integration
arguments; hence, obtaining sharp excess risk bounds that hold with
high probability is typically a more challenging problem than obtaining
in-expectation guarantees. If the remainder term
$\Delta_{\mathbf{Pr}}(n, \mathcal{G}, \delta)$ is of order $\log(1/\delta)$ as
a function of $\delta$, we call such guarantees \emph{exponential tail}
bounds.

Several frameworks have been developed for obtaining both types of statistical
performance guarantees. One of the simplest ways to obtain sharp in-expectation
guarantees without imposing strong distributional assumptions is via
\emph{average stability} (or \emph{leave-one-out}) arguments
\citep*{rogers1978finite, devroye1979distribution, haussler1994predicting}.
Among other approaches are in-expectation guarantees obtainable via stochastic
approximation arguments
(e.g., \citep*{robbins1951stochastic, walk1989convergence,
nemirovski2009robust, dieuleveut2016nonparametric}),
or by transporting regret bounds from the framework of prediction of individual sequences
\citep*{cesa2006prediction} to the stochastic setting via an online-to-batch
conversion (e.g., \citep*{cesa2004generalization, audibert2009fast}).

Recently, there has been a growing interest in obtaining sharp excess risk
bounds that hold with high probability.
One challenge in converting in-expectation guarantees to in-deviation
counterparts is that, typically, simply applying concentration tools results in extra
deviation terms of order $n^{-1/2}$.
Consequently, stochastic conversions of
``fast rate'' in-expectation guarantees of order $n^{-1}$ are converted to
in-deviation guarantees with the ``slow rate'' $n^{-1/2}$.  To preserve
optimal rates, stochastic conversions need to be performed via
probabilistic tools capable of taking some notion of variance into account
(e.g., Bernstein's inequality) while, \emph{at the same time}, extinguishing
the resulting variance terms by exploiting curvature of the loss function,
or imposed ``niceness'' (e.g., low noise) assumptions on the underlying
data-generating distribution.
While this conversion has been carried out successfully in a few important
cases of interest, as we are going to describe below, the wide applicability of
this machinery is limited as typically either the variance terms are too large
or because properly bounding them comes at the price of introducing restrictive
assumptions.

For the class of \emph{uniformly stable} algorithms (which is a more
restrictive notion than average stability; see the work by \citet*{bousquet2002stability}),
``fast rate'' excess risk bounds that hold with high-probability were recently
obtained by \citet*{klochkov2021stability}, while for online-to-batch
conversions see the work by \citet*{kakade2009generalization} and the references therein. In
terms of probabilistic tools, the former work builds on the notion of (weakly)
self-bounding functions \citep*{boucheron2000sharp, maurer2006concentration},
while the latter relies on the tail bound for martingales due to
\citet*{freedman1975tail}.
However, both works cited above impose strong assumptions on the loss
function -- assumptions that we will not use in the theory we are going to develop
in this paper. These assumptions are typically not satisfied in classical
settings of interest, such as in the case of regression with the squared loss.
Specifically, these works assume that the loss function is strongly convex
when the domain of the loss function is taken to be the parameter space
of the predictors. For example, in the setting of linear regression with
quadratic loss, such an assumption would amount to restrictions on the
smallest eigenvalue of the empirical covariance matrix.

One of the most successful general-purpose tool for obtaining sharp excess
risk upper bounds is the \emph{local Rademacher complexity}
\citep*{bartlett2005local, koltchinskii2006local, koltchinskii2011oracle}.
This approach automatically comes with exponential-tail in-deviation guarantees
due to the underlying mathematical machinery resting on a powerful
concentration bound for controlling the supremum of empirical processes due to
\citet*{talagrand1994sharper, talagrand1996new}.
At the same time, (localized) Rademacher averages are relatively simple
to upper bound, with many settings of interest covered in the existing
literature; for some examples, see the textbook by
\citet*[Chapters 13 and 14]{wainwright2019high}.

Due to technical reasons related to the so-called Bernstein condition
(see Section~\ref{sec:background-classical-localization} for a detailed
discussion), local Rademacher
complexity bounds are primarily suitable when two conditions hold:
$\mathcal{G}$ is convex and $\mathcal{F} = \mathcal{G}$.  A setup when
$\mathcal{F} = \mathcal{G}$ is called \emph{proper}.  Soon after the
development of local Rademacher complexities, it was noticed in the discussion
paper by \citet*{tsybakov-rademacher-discussion} that such restrictions fail to
include a very natural problem called \emph{model selection aggregation}
\citep*{nemirovski2000topics, tsybakov2003optimal}. In this problem,
the reference class of functions $\mathcal{G}$ is taken to be a
finite set of bounded functions; particularly, it is a non-convex set, and
local Rademacher complexity theory does not apply directly.
Understanding how to optimally aggregate statistical models constructed from
i.i.d.\ data (e.g., models arising from different tuning parameters, or
different statistical estimators) is a fundamental problem in statistics.
At the same time, deviation-optimal model selection aggregation procedures have been used to
construct computable procedures (not necessarily computationally efficient)
to demonstrate the achievability of some statistical minimax lower bounds (see, e.g.,
\citep*{rakhlin2017empirical, mendelson2019unrestricted, mourtada2021distribution}).

One challenge concerning the analysis of optimal model selection aggregation
estimators is that only \emph{improper procedures} (i.e., whose ranges
$\mathcal{F}$ are strictly larger than the reference class $\mathcal{G}$)
can obtain optimal performance (that is, improperness is \emph{necessary}).
Regarding in-expectation bounds, optimal
performance is achievable via exponential weights (or progressive mixture)
algorithms, with different proofs available in the literature;
see, e.g., the works by \citet*{catoni1997mixture, yang2000combining, vovk2001competitive,
juditsky2008learning}. However, none of the proofs for the in-expectation
optimality of exponential weights algorithm follow traditional strategies
based on empirical processes theory, such as those based on local Rademacher
complexities (see Section 3.2.2 in the work by \citet*{audibert2010pac}).
As it turns out, a successful application of such strategies would be
impossible because they would lead to optimal exponential-tail deviation bounds
which were shown not to hold by
\citet{audibert2008progressive}. \citet{audibert2008progressive} also proposed
a deviation-optimal method for
model selection aggregation, called \emph{star algorithm}.
One of the key takeaways from Audibert's analysis is that the excess risk
random variable $\mathcal{E}(\widehat{f}, \mathcal{G})$ can take \emph{negative
values} for improper estimators $\widehat{f}$.
It follows that, in general, in-expectation
guarantees for improper methods cannot be used to derive high-probability
bounds because Markov's inequality does not apply.
For example, \citet*[Theorems 1 and
2]{mourtada2021distribution} exhibit two different statistical estimators
for the problem of linear regression, both of which satisfy expectation-optimal excess risk
bounds obtainable via average stability arguments, and both of which incur
excess risk lower bounded by an absolute constant, with a constant probability.

The phenomenon concerning deviation-optimality of model selection aggregation estimators
has generated a lot of attention;
for example, see the works by \citet*{
  lecue2009aggregation,
  rigollet2012kullback,
  dai2012deviation,
  lecue2014optimal,
  wintenberger2017optimal,
  bellec2017optimal} for analysis of different model selection aggregation
procedures.
More broadly, the analysis of improper statistical estimators is receiving
increased attention, as such procedures were shown to be necessary for optimal
statistical performance in
logistic regression, see \citep*{hazan2014logistic, pmlr-v75-foster18a,
  mourtada2019improper}, and linear regression, see \citep*{
  vaskevicius2020suboptimality, mourtada2021distribution}.

\subsection{Paper Outline and Summary of Main Results}

In this paper, we obtain \emph{exponential-tail} excess risk upper bounds that hold
for a \emph{general class} of estimators satisfying a certain geometric
condition that we call the \emph{offset condition} (see
Definition~\ref{dfn:offset-condition}).
This geometric condition  can serve as a design principle for
statistical estimators that satisfy sharp excess risk guarantees with high
probability. In particular,
arguments based on convex geometry can be used to establish that such a
condition holds for a broad class of known estimators
(see the examples in Section~\ref{sec:examples}).
The class of estimators satisfying the geometric condition
includes improper learning settings that are not covered by the
classical theory of local Rademacher complexities.
In the classical setting of empirical risk minimization performed over
a convex class under boundedness assumptions, our complexity measure
yields results \emph{at least as sharp} as those obtainable by the classical
theory of local Rademacher complexities (this is made more precise in
Section~\ref{sec:bernstein-vs-offset}). The starting point of our analysis is the work of
\citet*{liang2015learning}, who were the first to provide an
\emph{in-expectation} analysis of the star aggregation algorithm based on
\emph{offset Rademacher complexity}, a modified notion of classical
localization that arises from the analysis of \emph{offset empirical processes}.

The main contribution of the current paper is obtaining results analogous to
the ones achievable via the classical local Rademacher complexity theory,
yet applicable under a different set of assumptions. In particular,
the main element of the classical theory is an \emph{estimator-independent}
Bernstein condition (see Section~\ref{sec:background-classical-localization}
for details) that ensures a linear relationship between the variance
and expectation of the excess loss class. In contrast, our results build on an
\emph{estimator-dependent} geometric condition, called the offset condition.
The theory developed in this paper shows that the offset condition is
sufficient to ensure sharp excess risk guarantees for improper
estimators. For example, as discussed in Section~\ref{sec:examples},
any estimator that satisfies the offset condition while outputting a sparse
combination of a given finite dictionary of functions attains deviation-optimal
excess risk rate for the problem of model selection aggregation, where
improperness is necessary for optimality.

The rest of the paper is organized as follows.
\begin{itemize}
  \item In Section~\ref{sec:notation}, we summarize the notation used in this
    paper.
  \item In Section~\ref{sec:background-local-complexity}, we provide background
    on local Rademacher complexity measures. Section~\ref{sec:background-classical-localization}
    contains a sketch of how the classical theory of localization, through its
    foundation built on Talagrand's concentration inequality, is applicable in
    regimes where the variance of the excess loss class is controlled by a
    linear function of its expectation (which results in the use of the Bernstein
    condition for Lipschitz losses).
    In Theorem~\ref{thm:classical-localization}, we formulate an excess risk
    bound guaranteed via the classical theory for empirical risk minimization
    algorithms under the Bernstein condition. This result serves as a
    benchmark for our paper, which we aim to match without invoking the Bernstein
    condition. We achieve this (to the extent quantified in
    Section~\ref{sec:main-results})
    by establishing a general machinery of localization via offset Rademacher
    complexities,
    the background on which is provided in Section~\ref{sec:background-offset-localization}.
  \item The main results are presented in Section~\ref{sec:main-results}.
    \begin{itemize}
      \item Section~\ref{sec:main-results-definitions} contains the definition
        of the geometric condition (called the offset condition) that serves as our
        replacement of the Bernstein condition and the definition of offset
        Rademacher complexity, which is slightly modified from the one
        appearing in prior work by \citet*{liang2015learning}.
        Specifically, we include additional negative terms,
        which play an important role in our concentration arguments and in proving that our
        notion of complexity is never worse than the classical notion of local
        Rademacher complexities (cf.\ Lemma~\ref{lemma:offset-localization-not-worse}).
      \item
        Section~\ref{sec:main-results-multiplier-proposition} contains a
        moment generating function bound for offset multiplier empirical
        processes (Proposition~\ref{prop:multiplier-concentration}), which
        is the main technical contribution of the present paper.
        This result serves as our replacement for Talagrand's
        concentration inequality, on which the classical theory of localization is
        built. The key feature of our concentration result is the fact that
        the variance of the supremum of offset multiplier processes is
        automatically controlled by a linear function of their expectations due
        to the presence of the negative quadratic terms inside the supremum.
        In contrast, the classical theory of localization needs to
        \emph{assume} that a certain variance-expectation relationship holds,
        as elaborated in Section~\ref{sec:background-classical-localization}.
        We prove Proposition~\ref{prop:multiplier-concentration} via an
        application of an exponential Efron-Stein inequality as discussed in
        greater detail in Section~\ref{sec:proof-of-multiplier-proposition}.
      \item In Section~\ref{sec:main-results-rademacher-bound}, we present our
        main theorem -- an exponential-tail excess risk bound stated in terms of
        the offset Rademacher complexity (cf.\
        Theorem~\ref{thm:rademacher-bound}).
        The key difference from the usual theory of localization is that the
        estimator-independent Bernstein condition appearing in
        Theorem~\ref{thm:classical-localization} is replaced via the
        estimator-dependent offset condition. We prove
        Theorem~\ref{thm:rademacher-bound} by bounding the Laplace transform of the
        offset empirical processes (arising through the geometric condition
        imposed on an estimator) in terms of the Laplace transform of a related
        offset multiplier empirical process.
        We then complete the proof via
        an application of Proposition~\ref{prop:multiplier-concentration},
        which provides tight control on the Laplace transform of the obtained
        offset multiplier process.
      \item
        Further connections between the classical theory and the theory
        developed in this paper are discussed in Section~\ref{sec:bernstein-vs-offset}.
        In Lemma~\ref{lemma:offset-localization-not-worse}, we show that the
        offset Rademacher complexity is at most as large as the
        classical local Rademacher complexity. Thus, the bounds obtained in
        our paper, when they apply, are at least as sharp as those obtainable via the classical
        theory (cf.\ Corollary~\ref{corollary:offset-to-classical}).
        Finally, we discuss the sense in which the Bernstein condition and the
        offset condition can be considered as dual to one another,
        when the roles of empirical and population quantities are interchanged
        (cf.\ Lemma~\ref{lemma:bernstein-offset-duality}).
    \end{itemize}
  \item Section~\ref{sec:examples} contains several applications of
    the theory developed in this paper.
    In Lemma~\ref{lemma:sparse-offset-complexity}, we bound the offset Rademacher
    complexity of sparse linear classes; in Corollary~\ref{corollary:sparsity},
    we show how this bound can be applied for non-linear classes via a
    change-of-basis argument.
    As a direct consequence, we show how our theory can
    yield deviation-optimal bounds for two different model selection
    aggregation procedures, both of which output a sparse combination of
    dictionary elements and satisfy the offset condition.
    Such applications are outside the scope of the classical theory of
    localization, due to the necessary improperness of optimal estimators, as
    discussed in the introduction.
    Finally, we discuss how the analysis of iterative regularization schemes
    fits within the theory developed in this paper.
  \item Sections~\ref{sec:proof-of-rademacher-bound},
    \ref{sec:proof-of-multiplier-proposition} and
    Appendix~\ref{appendix:deferred-proofs} contain the proofs.
\end{itemize}

\subsection{Notation}
\label{sec:notation}
We denote by $P$ the unknown distribution from which an i.i.d.\ sample $S_{n} = (X_{i},
Y_{i})_{i=1}^{n}$ is drawn, where $(X_{i}, Y_{i}) \in \mathcal{X} \times \mathcal{Y}$.
We denote the marginal distribution on $\mathcal{X}$ by $P_{X}$ and for the
sample $S_{n} = (X_{i}, Y_{i})_{i=1}^{n}$, let $S_{n}^{X} = (X_{i})_{i=1}^{n}$.
An estimator $\widehat{f}$ is a mapping between datasets and some class of
predictors $\mathcal{F}$, called the range of the estimator $\widehat{f}$.
The loss function is denoted by $\ell : \mathcal{Y} \times \mathcal{Y} \to [0,
\infty)$. For any function $f : \mathcal{X} \to \R$, denote $\ell_{f}(X, Y) =
\ell(f(X), Y)$. The population risk functional is defined by $R(f) = \E
\ell_{f}(X,Y)$, where the expectation is computed with respect to $(X,Y) \sim
P$ and $f$ is always assumed to be measurable.
We say that the loss function $\ell$ is $L$-Lipschitz in its first argument
if for any $y,y_{1},y_{2} \in \mathcal{Y}$ we have $|\ell(y_{1}, y) -
\ell(y_{2}, y)| \leq L|y_{1} - y_{2}|$. As a function of the sample
$S_{n}$, define the empirical risk functional $R_{n}$ by $R_{n}(f) = n^{-1}\sum_{i=1}^{n}
\ell_{f}(X_{i}, Y_{i})$. The function class $\mathcal{F}$ always denotes the range
of some estimator, while $\mathcal{G}$ denotes a set of reference functions.
We let $g^{\star} \in \operatorname{argmin}_{g \in \mathcal{G}} R(g)$, assuming
without loss of generality that such a function exists; otherwise $g^{\star}$
could be replaced by some function that is arbitrarily close to attaining $\inf_{g \in
\mathcal{G}} R(g)$. For any function class $\mathcal{H}$, denote its star-hull by
$\operatorname{star}(\mathcal{H}) = \{\lambda h : h \in \mathcal{H}, \lambda
\in [0, 1]\}$, where $(\lambda h)(x) = \lambda h(x)$. We say that a function
class $\mathcal{H}$ is star-shaped (around the origin) if
$\operatorname{star}(\mathcal{H}) = \mathcal{H}$.
For any $\mathcal{F}$ and $g$, the class $\mathcal{F} - g$ denotes
$\{f - g : f \in \mathcal{F}\}$. Finally, we denote by $a \lesssim b$ the
existence of some universal constant $c > 0$ such that $a \leq cb$.


\section{Background on Local Complexity Measures}
\label{sec:background-local-complexity}

This section provides background on local complexity measures.
In Section~\ref{sec:background-classical-localization}, we introduce the
classical notion of local Rademacher averages, developed in the series of
works by \citet*{koltchinskii2000rademacher, koltchinskii2001rademacher,
bartlett2002model, lugosi2004complexity, bartlett2005local, koltchinskii2006local},
among others. In particular, we explain why this theory is primarily
applicable in the proper learning setup, and explain how convexity assumptions
enter this theory through the so-called Bernstein condition.
The present paper aims to replace such assumptions and establish a
methodology that applies to improper and non-convex problems of interest,
such as model selection aggregation.
In Section~\ref{sec:background-offset-localization}, we discuss a more recent
approach of localization via offset Rademacher complexities,
introduced in the statistical context with the quadratic loss by
\citet*{liang2015learning}; see also \citep*{rakhlin2014online}.
The offset Rademacher complexity approach replaces the Bernstein condition with
an estimator-dependent offset condition, and thus paves the way to achieve the
goals set out in this paper -- obtaining sharp \emph{exponential-tail} excess risk
guarantees that hold for improper estimators.

\subsection{Local Rademacher Complexity}
\label{sec:background-classical-localization}
Let $\mathcal{F}$ be the range of some estimator $\widehat{f}$, $\mathcal{G}$ be
a reference class of functions, and let $g^{\star}$ denote any population risk
minimizer over the class $\mathcal{G}$, i.e.,
$g^{\star} \in \text{argmin}_{f \in \mathcal{G}} R(f)$.
The first step in the classical local Rademacher complexity analysis proceeds by noting
that
\begin{align*}
  \mathcal{E}(\widehat{f}, \mathcal{G})
  &= (R(\widehat{f}) - R(g^{\star})) - (R_{n}(\widehat{f}) - R_{n}(g^{\star}))
  + (R_{n}(\widehat{f}) - R_{n}(g^{\star}))
  \\
  &\leq
  \sup_{f \in \mathcal{F}}\left\{
    (R(f) - R(g^{\star})) - (R_{n}(f) - R_{n}(g^{\star}))
  \right\}
  + (R_{n}(\widehat{f}) - R_{n}(g^{\star}))
\end{align*}
The term $R_{n}(\widehat{f}) - R_{n}(g^{\star})$ is typically controlled by assuming that
it is at most $0$ almost surely. This is true, for example, if $\widehat{f}$ is
an empirical risk minimizer over $\mathcal{F}$ and $\mathcal{G} \subseteq
\mathcal{F}$.

The supremum term is controlled via Talagrand's concentration
inequality\footnote{
  We state a version with absolute constants.
Of independent interest, various extensions and refinements of Talagrand's
concentration bound are available in the literature;
we refer the interested reader to
\citep*{ledoux1997talagrand, massart2000constants, bousquet2002bennett,
  klein2005concentration, mendelson2010empirical, lederer2014new}.
}
for empirical processes
\citep*{talagrand1994sharper}, a functional Bernstein-type concentration inequality with
variance proxy
\begin{equation}
  \label{eq:variance-proxy-talagrand}
  \sigma^{2}(\mathcal{F}) = \sup_{f \in \mathcal{F}}
  \left\{
    \operatorname{Var}_{(X,Y) \sim
    P}\left[\ell_{f}(X,Y) - \ell_{g^{\star}}(X, Y)\right]
  \right\}.
\end{equation}
In particular, denoting
$
  Z = \sup_{f \in \mathcal{F}}\{ (R(f) - R(g^{\star})) - (R_{n}(f) -
  R_{n}(g^{\star}))\}
$
and letting $c>0$ be some universal constant, for any $\delta \in (0,1)$ with
probability at least $1-\delta$ we have
\begin{align}
  \label{eq:talagrands-concentration}
  Z
  \leq
  2\E Z
  + c\sqrt{\frac{\sigma^{2}(\mathcal{F})\log(1/\delta)}{n}}
  + c \frac{C_{\ell}\log(1/\delta)}{n},
\end{align}
where $C_{\ell}$ is a boundedness
constant such that the for any $f \in \mathcal{F}$ and
any $(X,Y) \in \mathcal{X}\times{Y}$ we have $\abs{\ell_{f}(X,Y) -
\ell_{g^{\star}}(X,Y)} \leq C_{\ell}$.

Let us now informally discuss how the above concentration bound leads to a
localization theory via Rademacher complexities. Let $\psi(f, g^{\star}) \geq 0$
be some measure of distance between the functions $f$ and $g^{\star}$
(for the sake of this high-level presentation,
we ignore the properties that $\psi$ needs to satisfy).
The idea  of localization is to replace
$\mathcal{F}$ in \eqref{eq:talagrands-concentration} by a localized subset
$\mathcal{F}(r) = \{f \in \mathcal{F} : \psi(f, g^{\star}) \leq r\}$ for some
radius $r > 0$. The theory of local Rademacher complexities then aims
to compute the smallest value of $r > 0$ such that the supremum of the empirical
process computed over the localized class $\mathcal{F}(r)$ yields an upper
bound on the excess risk of an estimator of interest (typically the empirical risk
minimization estimator).

To allow for an explicit control of the variance proxy
$\sigma^{2}(\mathcal{F}(r))$, it is further assumed that for any $f \in
\mathcal{F}$, we have $\operatorname{Var}(\ell_{f} - \ell_{g^{\star}}) \leq
\psi(f, g^{\star})$. There are two consequences of the above assumed relation
between the variance and the distance function. First, it holds that
$\sigma^{2}(\mathcal{F}(r)) \leq r$. Second, it is possible to obtain a uniform
Bernstein-type concentration bound on the excess risk over the full class
$\mathcal{F}$, such that for each $f \in \mathcal{F}$, the variance-proxy is
proportional to $\sqrt{\psi(f, g^{\star})/n}$. For more details and a precise
quantification of the above statements we refer to
\citep*[Theorem 14.20, Equation 14.51]{wainwright2019high}.

When the obtained uniform Bernstein-type concentration bound is applied to the
estimator $\widehat{f}$ of interest, we obtain an upper bound on its excess
risk in terms of the supremum over a localized class $\mathcal{F}(r)$ (for some
radius $r > 0$), and the ``slow rate'' variance term $\sqrt{\psi(\widehat{f},
g^{\star})/n}$. To compensate for this variance term and to obtain a ``fast rate''
excess risk bound, it is further assumed that for some constant $B > 0$ the
following inequality holds for any $f \in \mathcal{F}$:
$\psi(f, g^{\star}) \leq B \E[\ell_{f} - \ell_{g^{\star}}]$.
Since the left hand side of the above equation is a non-negative distance, the
right hand side also needs to be non-negative. This, in turn, constrains us to
the settings where $\mathcal{F}$, the range of the estimator of interest,
cannot be larger than the reference class $\mathcal{G}$, for otherwise there would exist a
data generating distribution $P$ and a function $f \in \mathcal{F}$ such that
$\E[\ell_{f} - \ell_{g^{\star}}] < 0$.

Summing up the above, the theory of local Rademacher
complexities is rooted in the following variance-expectation assumption
-- a widely used condition in the empirical processes analysis of M-estimators
(see, e.g., the works by \citet*{geer2000empirical, massart2000some}):
\begin{equation}
  \label{eq:variance-controlled-by-expectation}
  \operatorname{Var}(\ell_{f} - \ell_{g^{\star}})
  \leq
  \psi(f, g^{\star})
  \leq
  B\E[\ell_{f} - \ell_{g^{\star}}]
  \quad
  \text{for any }
  f \in \mathcal{F}.
\end{equation}

In applications in learning theory, a natural choice for the distance function
$\psi$ is a suitably rescaled squared
$L_{2}(P_X)$ norm. Indeed, if the loss function $\ell$ is $C_{b}$-Lipschitz
in its first argument, then $\operatorname{Var}(\ell_{f} - \ell_{g^{\star}})
\leq C_{b}^{2}\E(f(X) - g^{\star}(X))^{2}$.
Thus, the remaining question is what is the smallest allowed value
$r > 0$ such that Talagrand's concentration inequality
\eqref{eq:talagrands-concentration} applied to $\mathcal{F}(r)$ yields an upper
bound on the excess risk $\mathcal{E}(\widehat{f}, \mathcal{G})$. Using a
peeling argument applied to a reweighted excess loss class (cf.
\citet*[Section 3]{bartlett2005local}), this value
can be shown to equal a solution to a certain fixed-point equation, leading to
the following definition.

\begin{definition}[Local Rademacher Complexity]
  \label{dfn:local-complexity}
  Let $P_{X}$ denote any distribution supported on $\mathcal{X}$
  and let $\mathcal{H}$ denote any class of functions mapping $\mathcal{X}$ to
  $\mathbb{R}$. For $r > 0$, let
  $\mathcal{H}(r) = \{h \in \mathcal{H} : \E_{X \sim P_{X}} [h(X)^{2}] \leq r\}$.
  Let $\sigma = (\sigma_{i})_{i=1}^{n}$ be a
  sequence of i.i.d.\ Rademacher (i.e., symmetric and
  $\{\pm1\}$-valued) random variables and let $S_{n}^{X} = (X_{i})_{i=1}^{n}$
  denote $n$ independent random variables distributed according to $P_{X}$.
  Then, for any $\gamma > 0$, the local Rademacher complexity
  of the class $\mathcal{H}$ is defined by
  $$
    \mathfrak{R}^{\text{loc}}_{n}(P_{X}, \mathcal{H}, \gamma)
    = \inf\left\{
      r > 0
      :
      \mathbf{E}_{S_{n}^{X}, \sigma}
      \left[
        \sup_{h \in \mathcal{H}(\gamma^{-1} r)} \left\{
          \frac{1}{n}\sum_{i=1}^{n}\sigma_{i}h(X_{i})
        \right\}
      \right]
      \leq r
    \right\}.
  $$
\end{definition}

It now remains to discuss when the second inequality of
\eqref{eq:variance-controlled-by-expectation} holds in a
\emph{distribution-free}\footnote{
  Recall that, as discussed in the introduction, the present paper aims to
  obtain excess risk bounds that hold for any distribution $P$ supported on
  $\mathcal{X} \times \mathcal{Y}$.
}
sense (as opposed to, e.g., imposing low-noise
assumptions on the underlying distribution, as is frequently done in the
classification setting). The primary application domain
where this is true is when a function class $\mathcal{F}$ is convex,
$g^{\star} \in \mathcal{G}$ denotes a population risk
minimizer over all functions in $\mathcal{F}$ (thus, $\mathcal{F} \subseteq
\mathcal{G}$, constraining to study the proper learning setting),
and the loss function $\ell$ is strongly convex
in its first argument (cf. \citet*[Section 5.2]{bartlett2005local}).
The second inequality in
\eqref{eq:variance-controlled-by-expectation}, when $\psi$ is taken to be the
squared $L_{2}(P_{X})$ norm, is often called the \emph{Bernstein
condition} (cf.\ \citet*{bartlett2006empirical}), which we state below.

\begin{definition}[Bernstein Condition]
  \label{dfn:bernstein-condition}
  Let $P$ be a distribution supported on $\mathcal{X} \times
  \mathcal{Y}$ and let $\ell$ be a loss function with domain
  $\mathcal{Y} \times \mathcal{Y}$.
  The tuple $(P, \ell, \mathcal{F}, g^{\star})$ satisfies the Bernstein
  condition with parameter $\gamma > 0$ if the following holds for any
  $f \in \mathcal{F}$:
  \begin{equation}
    \label{eq:bernstein-condition}
    \E_{X \sim P_{X}}(f(X) - g^{\star}(X))^{2}
    \leq
    \frac{1}{\gamma} \E_{(X,Y) \sim P} \left[ \ell_{f}(X, Y) -
    \ell_{g^{\star}}(X, Y)\right].
  \end{equation}
\end{definition}

Summing up all of the above, let us now state a result obtained by
\citet*{bartlett2005local}. In our notation, it reads as follows.

\begin{theorem}[Corollary 5.3 in \citep{bartlett2005local}]
  \label{thm:classical-localization}
  Let $\mathcal{F}$ be a class of functions mapping $\mathcal{X}$ to
  $[-b,b]$ for some $b > 0$. Let $P$ be a distribution supported on
  $\mathcal{X} \times [-b, b]$ and let $g^{\star} \in \text{argmin}_{g \in
  \mathcal{G}} R(g)$, where $\mathcal{G}$ is some reference class of functions.
  Suppose that the following three conditions hold:
  \begin{enumerate}
    \item The loss function $\ell : [-b,b] \times [-b,b] \to [0, \infty)$ is
      $C_{b}$-Lipschitz in its first argument;
    \item The tuple $(P, \ell, \mathcal{F}, g^{\star})$ satisfies the Bernstein
      condition with parameter $\gamma > 0$;
    \item The function class $\mathcal{F} - g^{\star} = \{f - g^{\star} : f \in
      \mathcal{F}\}$ is star-shaped around $0$ (cf.\
      Section~\ref{sec:notation}).
  \end{enumerate}
  Let $\widehat{f}$ be an estimator such that $R_{n}(\widehat{f}) -
  R_{n}(g^{\star}) \leq 0$ almost surely. Then, for any $\delta \in (0,1)$
  with probability at least $1-\delta$, we have
  \begin{equation}
    \mathcal{E}(\widehat{f}, \mathcal{G})
    \leq c_{1} C_{b} \mathfrak{R}^{\text{loc}}_{n}(P_{X}, \mathcal{F} - g^{\star},
    C_{b}^{-1}\gamma)
    + c_{2}\frac{(C_{b}b + C_{b}^{2}\gamma^{-1})\log(1/\delta)}{n},
  \end{equation}
  where $c_{1},c_{2} > 0$ are universal constants.
\end{theorem}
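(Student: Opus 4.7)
The plan is to combine Talagrand's inequality~\eqref{eq:talagrands-concentration} with a peeling argument organised by shells defined via $\|f - g^\star\|_{L_{2}(P_{X})}^{2}$. Since $R_{n}(\widehat f) - R_{n}(g^\star) \leq 0$ by assumption, the decomposition at the start of Section~\ref{sec:background-classical-localization} reduces the task to a uniform control of
\begin{equation*}
Z(\mathcal{F}') \;:=\; \sup_{f \in \mathcal{F}'}\bigl\{(R(f) - R(g^\star)) - (R_{n}(f) - R_{n}(g^\star))\bigr\}
\end{equation*}
over suitable subsets $\mathcal{F}' \subseteq \mathcal{F}$. Fix $r^{\star}$ as a constant multiple of $\mathfrak{R}^{\text{loc}}_{n}(P_{X}, \mathcal{F}-g^\star, C_{b}^{-1}\gamma)$ and introduce the inner ball $\mathcal{F}_{0} = \{f : \|f-g^\star\|^{2}_{L_{2}(P_{X})} \leq r^{\star}\}$ together with geometric shells $\mathcal{F}_{k} = \{f : 2^{k-1}r^{\star} < \|f-g^\star\|^{2}_{L_{2}(P_{X})} \leq 2^{k}r^{\star}\}$ for $1 \leq k \leq K$ with $K = O(\log(b^{2}/r^{\star}))$.

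On each shell $\mathcal{F}_{k}$ the Lipschitz property gives the variance bound $\sigma^{2}(\mathcal{F}_{k}) \leq C_{b}^{2} \cdot 2^{k} r^{\star}$ and the envelope $C_{\ell} \lesssim C_{b} b$. For $\E Z(\mathcal{F}_{k})$, I would apply symmetrization followed by the Ledoux--Talagrand contraction principle for $C_{b}$-Lipschitz scalar functions to reduce to the Rademacher complexity of the centred class $(\mathcal{F}-g^\star)(2^{k}r^{\star})$. The star-shape hypothesis ensures that $r \mapsto \E \sup_{h \in (\mathcal{F}-g^\star)(r)} n^{-1}\sum_{i}\sigma_{i}h(X_{i})$ is sub-root, so a standard rescaling argument yields $\mathfrak{R}_{n}((\mathcal{F}-g^\star)(2^{k}r^{\star})) \leq 2^{k/2}\mathfrak{R}_{n}((\mathcal{F}-g^\star)(r^{\star})) \leq 2^{k/2} r^{\star}$ by the defining property of the critical radius in Definition~\ref{dfn:local-complexity}. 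Substituting into~\eqref{eq:talagrands-concentration} at confidence level $\delta_{k} = \delta/(2k(k+1))$ gives, with probability at least $1-\delta_{k}$,
\begin{equation*}
Z(\mathcal{F}_{k}) \;\lesssim\; C_{b} \cdot 2^{k/2} r^{\star} + \sqrt{\frac{C_{b}^{2} \cdot 2^{k} r^{\star} \log(1/\delta_{k})}{n}} + \frac{C_{b} b \log(1/\delta_{k})}{n}.
\end{equation*}

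Next I would invoke the Bernstein condition for every $f \in \mathcal{F}_{k}$ with $k \geq 1$: the inequality $\|f-g^\star\|_{L_{2}(P_{X})}^{2} \leq \gamma^{-1}\mathcal{E}(f,\mathcal{G})$ converts the shell radius into $2^{k-1}r^{\star} \leq \gamma^{-1}\mathcal{E}(f,\mathcal{G})$, so upon plugging in $\widehat f$ (which lies in some shell) both the $C_{b} \cdot 2^{k/2} r^{\star}$ term and the square-root term can be bounded by expressions of the form $\varepsilon\,\mathcal{E}(\widehat f,\mathcal{G}) + \text{const} \cdot C_{b}^{2}\gamma^{-1}\log(1/\delta_{k})/n$ via AM-GM. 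Choosing $\varepsilon$ small enough to absorb a strict fraction of $\mathcal{E}(\widehat f,\mathcal{G})$ into the left-hand side leaves a remainder of order $C_{b} r^{\star} + (C_{b} b + C_{b}^{2}\gamma^{-1})\log(1/\delta_{k})/n$. A union bound over the $O(\log n)$ shells inflates $\log(1/\delta_{k})$ only by an additive $\log\log$ contribution absorbed into universal constants; the inner shell $\mathcal{F}_{0}$ is treated directly by Talagrand with $\E Z(\mathcal{F}_{0}) \lesssim r^{\star}$ taken from the definition of the critical radius.

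The main obstacle I anticipate is the bookkeeping in the peeling step: the AM-GM splits have to be calibrated so that the total coefficient of $\mathcal{E}(\widehat f,\mathcal{G})$ arising after summing over all shells and over both the $C_{b} \cdot 2^{k/2} r^{\star}$ and the Bernstein-derived $\sqrt{\cdot}$ residuals remains strictly below $1$, while still leaving precisely the clean remainder $(C_{b} b + C_{b}^{2}\gamma^{-1})\log(1/\delta)/n$ promised by the statement. A secondary subtlety is the sub-root inequality $\mathfrak{R}_{n}((\mathcal{F}-g^\star)(\alpha r)) \leq \sqrt{\alpha}\,\mathfrak{R}_{n}((\mathcal{F}-g^\star)(r))$ for $\alpha \geq 1$, whose justification is exactly where the star-shape hypothesis is essential, since it permits rescaling any element of a larger $L_{2}(P_{X})$-ball into the smaller one while remaining in the class.
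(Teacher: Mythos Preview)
The paper does not prove this theorem: it is quoted as Corollary~5.3 of \citet*{bartlett2005local} and serves purely as the benchmark against which the paper's own Theorem~\ref{thm:rademacher-bound} is measured. The only related content in the paper is the informal sketch preceding the statement in Section~\ref{sec:background-classical-localization}, and your proposal is consistent with that sketch, so there is no ``paper's own proof'' to compare against.

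On the approach itself: your peeling-plus-union-bound route is a workable variant, but it differs from the original argument in \citet*{bartlett2005local}, which applies Talagrand's inequality \emph{once} to a reweighted excess-loss class (their Section~3) whose variance proxy is uniformly bounded by the localization radius; this sidesteps the $\log\log$ inflation you correctly flag as arising from a union bound over $O(\log n)$ shells. One calibration point to watch in your version: by Definition~\ref{dfn:local-complexity}, the fixed point $r_{0}=\mathfrak{R}^{\text{loc}}_{n}(P_{X},\mathcal{F}-g^{\star},C_{b}^{-1}\gamma)$ controls the Rademacher average over the $L_{2}(P_{X})$-ball of radius $(C_{b}^{-1}\gamma)^{-1}r_{0}=C_{b}\gamma^{-1}r_{0}$, not over the ball of radius $r_{0}$. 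Your inner shell $\mathcal{F}_{0}$ should therefore be taken at radius $r^{\star}=C_{b}\gamma^{-1}r_{0}$ for the arithmetic in the AM-GM steps to produce the stated leading term $c_{1}C_{b}r_{0}$ rather than an extra factor of $C_{b}\gamma^{-1}$.
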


\paragraph{Limitations.}
We conclude this section by briefly summarizing two limitations of the above framework.

The first limitation is its reliance on the Bernstein
condition. As already discussed, a natural application domain
where this condition holds, together with the condition that $R_{n}(\widehat{f}) -
R_{n}(g^{\star}) \leq 0$ almost surely, is when $\mathcal{F} = \mathcal{G}$
and $\mathcal{F}$ is a convex class.
Since improper learning settings do not satisfy the Bernstein condition
uniformly for all data generating distributions $P$,
Theorem~\ref{thm:classical-localization} does not easily lend itself to
non-convex and improper application domains that arise,
for instance, in model selection aggregation or iterative regularization applications (cf.\
Section~\ref{sec:examples}). The present paper addresses these limitations
(see, in particular, Theorem~\ref{thm:rademacher-bound} and example
applications in Section~\ref{sec:examples}).

The second limitation concerns the boundedness assumptions, also
present in our work. Such
assumptions prevent us from capturing unbounded, and in particular,
heavy-tailed problems that have recently received a lot of attention; see the
survey by \citet*{lugosi2019mean}. For progress in this direction, we refer to
the works by \citet*{mendelson2015learning, mendelson2018learning,
oliveira2016lower}, where one-sided concentration
arguments and moment-equivalence assumptions play a central role.
The above line of work provides powerful tools for treating many unbounded and
potentially heavy-tailed problems of interest, that fall outside of the scope
of the present paper.
However, let us remark that such assumptions do not allow for immediate
distribution-free treatment of the bounded setting considered in our work;
see, for example, the discussions in \citep*{saumard2018optimality,
vaskevicius2020suboptimality}.

\subsection{Offset Rademacher Complexity}
\label{sec:background-offset-localization}

We now describe the offset Rademacher complexity approach due to
\citet*{liang2015learning}, an empirical processes theory-based technique shown to yield
distribution-free \emph{in-expectation} guarantees for Audibert's star
algorithm in the bounded setting.
Let us preface the rest of this section by
noting that the analysis in the above-cited paper is constrained to the case
when $\ell$ is the quadratic loss, i.e., for any $y, y'$ we have $\ell(y,y') =
(y - y')^{2}$.

Let $\mathcal{G} = \{g_{1}, \dots, g_{m}\}$ denote a dictionary of $m$
functions mapping $\mathcal{X} \to [-b, b]$. Then, as discussed in the introduction,
any estimator whose range $\mathcal{F}$ is equal to $\mathcal{G}$ (i.e., any proper
estimator) can only yield slow excess risk
rates of order $n^{-1/2}$ instead of the optimal rate $b^{2}\log(m)/n$.
Hence, due to the necessary improperness of optimal estimators, the model
selection aggregation problem does not easily fit into the classical theory of
localization discussed in the previous section.
The optimal in-expectation and in-deviation performance is attained by the star
estimator $\widehat{f}^{\text{(star)}}$ due to \citet*{audibert2008progressive},
defined as follows:
$$
  \widehat{f}^{\text{(star)}}
  = \operatorname{argmin}_{f \in \mathcal{G}, \lambda \in [0, 1]}
  R_{n}(\lambda \widehat{f}^\text{(ERM)} + (1-\lambda)f),
  \,\,\text{where}\,\,
  \widehat{f}^{\text{(ERM)}}
  = \operatorname{argmin}_{f \in \mathcal{G}}
  R_{n}(f).
$$
Recall that in the above expressions $R_{n}$ denotes the empirical risk functional.

The key observation of \citet*[Lemma 1]{liang2015learning} is that the star
estimator satisfies a \emph{deterministic} condition that we state below.
For any observed sample $S_{n} = (X_{i},
Y_{i})_{i=1}^{n}$, the following holds with a constant $\gamma = 1/18$:
\begin{equation}
  \label{eq:star-quadratic-loss-offset}
  R_{n}(\widehat{f}^{\text{(star)}}) - R_{n}(g^{\star})
  \leq -\gamma
  \sum_{i=1}^{n} (f^{\text{(star)}}(X_{i}) - g^{\star}(X_{i}))^{2}.
\end{equation}
The above condition can be interpreted as a dual to the Bernstein condition
(cf.\ Definition~\ref{dfn:bernstein-condition}), with population quantities
replaced by its empirical counterparts (see Section~\ref{sec:bernstein-vs-offset} and
Lemma~\ref{lemma:bernstein-offset-duality}); however, the above inequality does
not require the estimator $f^{\text{(star)}}$ to be proper (in fact, it is
improper), nor does it require its range $\mathcal{F}$ to be convex.
We defer an extended discussion to Section~\ref{sec:bernstein-vs-offset}.

The condition \eqref{eq:star-quadratic-loss-offset} can be used to upper bound
the excess risk as follows:
\begin{align*}
  &\mathcal{E}(f^{\text{(star)}}, \mathcal{G})  \\
  &= (R(f^{\text{(star)}}) - R(g^{\star}))
  - (R_{n}(f^{\text{(star)}}) - R_{n}(g^{\star}))
  + (R_{n}(f^{\text{(star)}}) - R_{n}(g^{\star})) \\
  &\leq
  (R(f^{\text{(star)}}) - R(g^{\star}))
  - (R_{n}(f^{\text{(star)}}) - R_{n}(g^{\star}))
  - \gamma \frac{1}{n}\sum_{i=1}^{n}
  (f^{\text{(star)}}(X_{i}) - g^{\star}(X_{i}))^{2} \\
  &\leq
  \sup_{f \in \mathcal{F}}
  \big\{
    (R(f) - R(g^{\star}))
    - (R_{n}(f) - R_{n}(g^{\star}))
    - \gamma \frac{1}{n}\sum_{i=1}^{n}
    (f(X_{i}) - g^{\star}(X_{i}))^{2}
  \big\}.
\end{align*}
Taking expectations on both sides and applying classical symmetrization and
contraction arguments, \citet*[Theorem 3]{liang2015learning} show that
the following holds for some absolute constants $c_{1},c_{2}>0$:
\begin{equation}
  \label{eq:offset-expected-bound}
  \E_{S_{n}}
  \mathcal{E}(f^{\text{(star)}}, \mathcal{G})
  \leq c_{1}b \E_{S_{n}, \sigma}\bigg[
  \sup_{h \in \mathcal{F} - g^{\star}}
  \bigg\{
    \frac{1}{n}\sum_{i=1}^{n} \sigma_{i}h(X_{i})
    - \frac{\gamma}{b}h(X_{i})^{2}
  \bigg\}
  \bigg],
\end{equation}
where $\sigma = (\sigma_{1},\dots,\sigma_{n})$ denotes a sequence of i.i.d.\
Rademacher random variables. The right-hand side of the above equation is called the \emph{offset
Rademacher complexity} of the class $\mathcal{F} - g^{\star}$;
the negative quadratic terms produce a localization
phenomenon similar to that of Definition~\ref{dfn:local-complexity}.
As we shall see in Corollary~\ref{corollary:offset-to-classical}, a modified
notion of the above complexity measure yields guarantees at least as
sharp as those obtainable via local Rademacher complexities introduced in the
previous section.

\paragraph{Limitations.} We now discuss the limitations of the existing results
based on the above approach. First, the bound \eqref{eq:offset-expected-bound}
holds only \emph{in-expectation}. However, the star estimator was introduced
to address the \emph{in-deviation} optimality for model selection aggregation,
and thus, obtaining in-deviation guarantees for this estimator are of
particular interest \citep{audibert2008progressive}.
As discussed in the introduction, transforming in-expectation
guarantees to in-deviation guarantees for \emph{improper} statistical
estimators presents several technical difficulties. High probability
alternatives to the bound \eqref{eq:offset-expected-bound} have not been
obtained before our work since there is no replacement for Talagrand's
concentration inequality on which the classical theory of localization resides.
We develop such a (one-sided) concentration result in
Proposition~\ref{prop:multiplier-concentration}, using which we obtain an
exponential-tail offset Rademacher complexity deviation bound in
Theorem~\ref{thm:rademacher-bound}.

While high probability bounds in terms of offset Rademacher complexity have not
been previously developed, let us now discuss some deviation bounds that have
been obtained using the framework described above.
The primary high probability result obtained in \citep*[Theorem 4]{liang2015learning}
is not distribution-free because it relies on an additional lower isometry
assumption. In addition, it upper bounds the excess risk in terms of
another \emph{random variable} no easier to control than the excess risk itself;
further control on this random variable is only shown for finite classes or
their star-hulls. The obtained bound for star-hulls of finite classes is then
used to obtain a high-probability excess risk bound for Audibert's star
algorithm.
However, due to the use of covering-number arguments, the resulting excess risk bounds
suffer from excess logarithmic terms; see \citep*[Lemma 11]{liang2015learning}.

The very recent work of \citet*{vijaykumar2021localization} extends the
geometric inequality \eqref{eq:star-quadratic-loss-offset} to general loss
functions. However, the high probability bounds obtained therein
are expressed in terms of empirical covering numbers where the covering is
performed with the \emph{worst-case} metric. In contrast, local Rademacher
complexity (cf.\ Definition~\ref{dfn:local-complexity}) can be upper
bounded using covering number arguments where the covering is performed with
the $L_{2}(P_{X})$ metric, leading to minimax optimal bounds in many cases (see
\citet*[Chapters 13 and 14]{wainwright2019high} for some examples).
Crucially, in general the notion of complexity based on empirical covering numbers using
worst-case metric used by \citet*{vijaykumar2021localization}
does not capture statistical minimax
optimality and results in suboptimal bounds even for the star
estimator applied to a problem with a \emph{finite} reference class $\mathcal{G}$.
In contrast, we show in Section~\ref{sec:examples} how the geometric
inequality obtained by \citet*{vijaykumar2021localization}, when used
with offset Rademacher complexity bounds developed in this paper, results in
minimax optimal bounds for the star aggregation algorithm.


\section{Main Results}
\label{sec:main-results}

The main results of this paper are presented in this section. In
Section~\ref{sec:main-results-definitions},
we introduce the geometric condition (called the offset condition) used to replace
the Bernstein condition; further, we define the offset Rademacher complexity
(slightly modified from the one appearing in prior works)
used to replace the classical notion of local Rademacher complexity.
Section~\ref{sec:main-results-multiplier-proposition}
contains a moment generating function bound for shifted multiplier empirical processes.
This result serves as our replacement for Talagrand's concentration inequality,
the foundation of the classical theory of localization.
Section~\ref{sec:main-results-rademacher-bound} contains a high probability
excess risk bound in terms of the offset Rademacher complexity; this result
applies in settings where the Bernstein condition does not hold.  Finally, in
Section~\ref{sec:bernstein-vs-offset}, we provide a comparison between the offset
and Bernstein conditions and demonstrate that the theory presented in this paper can
recover the classical distribution-free bounds overviewed in
Section~\ref{sec:background-classical-localization}.

\subsection{Definitions}
\label{sec:main-results-definitions}

We begin with the definition of the \emph{offset condition}. Observe that this
condition is \emph{estimator-dependent}, as opposed to the Bernstein condition
(cf.\ Definition~\ref{dfn:bernstein-condition}).

\begin{definition}[Offset Condition]
  \label{dfn:offset-condition}
  Let $\mathcal{G}$ be a class of functions mapping $\mathcal{X}$ to $[-b, b]$
  for some $b > 0$. Fix a loss function
  $\ell : [-b, b] \times [-b, b] \to [0, \infty)$ and recall that $R_{n}$ denotes
  the induced empirical risk functional.
  Let $\varepsilon : [0,1] \to \R$ be some function and let $\gamma > 0$ be
  some positive real number.
  Let $P$ be a distribution supported on $\mathcal{X} \times \mathcal{Y}$.
  An estimator $\widehat{f}$ satisfies the \emph{offset condition}
  with respect to $(\mathcal{G}, \ell, \varepsilon, \gamma)$
  for the distribution $P$, if for any
  any $\delta \in [0,1]$ the following holds:
  $$
    \P_{S_{n}}\bigg(
      R_{n}(\widehat{f}) - R_{n}(g^{\star})
      \leq -\gamma \sum_{i=1}^{n}(\widehat{f}(X_{i}) - g^{\star}(X_{i}))^{2}
      + \varepsilon(\delta)
    \bigg)
    \geq 1 - \delta,
  $$
  where $S_{n} = (X_{i}, Y_{i})_{i=1}^{n}$ is an i.i.d.\ sample drawn
  from the distribution $P$
  and $g^{\star} = g^{\star}(\mathcal{G}, P, \ell)$ denotes any population
  risk minimizer in the class $\mathcal{G}$.

  Whenever the following deterministic inequality holds
  for any sample $S_{n} = (X_{i}, Y_{i})_{i=1}^{n} \in (\mathcal{X} \times
  \mathcal{Y})^{n}$:
  $$
    R_{n}(\widehat{f}) - R_{n}(g^{\star})
      \leq -\gamma \sum_{i=1}^{n}(\widehat{f}(X_{i}) - g^{\star}(X_{i}))^{2}
      + \varepsilon,
  $$
  we say that the estimator $\widehat{f} = \widehat{f}(S_{n})$
  satisfies the \emph{deterministic offset condition} with
  respect to $(\mathcal{G}, \ell, \varepsilon, \gamma)$.
\end{definition}
In the above definition the function $\varepsilon(\cdot)$ allows for the
offset condition to fail with probability $\delta$, while incurring a penalty
$\varepsilon(\delta)$. As we shall see in Section~\ref{sec:examples}, such a
condition naturally enters the analysis of some improper estimators.
Also, we will discuss some example estimators that satisfy the deterministic
offset condition.

In Section~\ref{sec:background-classical-localization}, we described how
the Bernstein condition implies local Rademacher complexity excess risk bounds for
empirical risk minimization estimators. Likewise, we shall see that offset
condition implies excess risk bounds expressed in terms of the offset
Rademacher complexity defined below.

\begin{definition}[Offset Rademacher Complexity]
  \label{dfn:offset-complexity}
  Let $P_{X}$ be any distribution supported on $\mathcal{X}$
  and let $\mathcal{H}$ be any class of functions mapping $\mathcal{X}$ to
  $\mathbb{R}$.
  Let $\sigma = (\sigma_{i})_{i=1}^{n}$ denote a
  sequence of i.i.d.\ Rademacher (i.e., symmetric and
  $\{\pm1\}$-valued) random variables and let $S_{n}^{X} = (X_{i})_{i=1}^{n}$
  denote $n$ independent random variables distributed according to $P_{X}$.
  Then, for any $\gamma > 0$, the offset Rademacher complexity
  of the class $\mathcal{H}$ is defined by
  $$
    \mathfrak{R}_{n}^{\text{off}}(P_{X}, \mathcal{H}, \gamma) =
    \E_{S_{n}^{X}, \sigma}\left[
      \sup_{h \in \mathcal{H}}\left\{
        \frac{1}{n}
        \sum_{i=1}^{n} \sigma_{i}h(X_{i}) - \gamma h(X_{i})^{2}
        - \gamma \E_{X \sim P_{X}}[h(X)^{2}]
      \right\}
    \right].
  $$
\end{definition}

Let us remark that our definition above differs from the one presented in
Section~\ref{sec:background-offset-localization} since we include extra negative
terms $-\gamma \E_{X \sim P_{X}}[h(X)^{2}]$ inside the above supremum.
This refinement is necessary for our concentration argument to work, since we
establish moment bounds for shifted multiplier processes that contain negative
population terms (cf.\ Section~\ref{sec:main-results-multiplier-proposition}).
At the same time, the inclusion of the negative quadratic population terms allows us to
show
that the above notion of
complexity is at least as sharp as the classical one introduced in
Definition~\ref{dfn:local-complexity}
(see Lemma~\ref{lemma:offset-localization-not-worse} in
Section~\ref{sec:bernstein-vs-offset} for details).

\subsection{Concentration of Shifted Multiplier Processes}
\label{sec:main-results-multiplier-proposition}

The primary technical tool in this paper is the following proposition, which
proves a Bernstein-type one-sided concentration bound for the supremum of
shifted multiplier processes (defined below in
Equation~\eqref{eq:dfn-multiplier}). This proposition plays a crucial role in
establishing our main result, Theorem~\ref{thm:rademacher-bound} presented in
the next section. In particular, provided that an estimator
satisfies the offset condition, we will show that the moment generating
function of its excess risk can be controlled by the moment generating function
of a certain shifted multiplier process.
We defer the proof of the below proposition to
Section~\ref{sec:proof-of-multiplier-proposition}.

\begin{proposition}
  \label{prop:multiplier-concentration}
  Let $\mathcal{H}$ be a class of functions mapping $\mathcal{X}$ to
  $\R$. Let $P_{(X, \zeta)}$ be a joint distribution on
  $\mathcal{X} \times \R$ with marginal distributions $P_{X}$ and $P_{\zeta}$,
  and let $S_{n} = (X_{i}, \zeta_{i})_{i=1}^{n}$
  be a set of $n$ i.i.d.\ samples from $P_{(X,\zeta)}$. Fix any positive
  constant $\gamma > 0$ and define a random variable $U = U(S_{n})$ to be the
  supremum of the offset multiplier process as follows:
  \begin{equation}
    \label{eq:dfn-multiplier}
    U =
    \sup_{h \in \operatorname{star}(\mathcal{H})}\left\{
      \sum_{i=1}^{n} \zeta_{i}h(X_{i}) -
      \E_{(X, \zeta) \sim P_{(X, \zeta)}}[\zeta h(X)]
      - \gamma h(X_{i})^{2}
      - \gamma \E_{X \sim P_{X}}[h(X)^{2}]
    \right\}.
  \end{equation}
  Suppose that there exist positive constants $\kappa$ and $\sigma$ such that
  $
    \sup_{h \in \mathcal{H}} \norm{h}_{L_{\infty}(P_{X})} \leq \kappa
  $
  and
  $
    \norm{\zeta}_{L_{\infty}(P_{\zeta})} \leq \sigma.
  $
  Then, for $\eta = 8(\sigma^{2}\gamma^{-1} + \gamma\kappa^{2})$ and any
  $\lambda \in (0, 1/\eta)$ the following holds:
  \begin{equation}
    \label{eq:multiplier-process-mgf-bound}
    \log \E e^{\lambda(U - \E U)} \leq
    \frac{\lambda^{2}\eta\E U}{2(1 - \eta\lambda)}.
  \end{equation}
\end{proposition}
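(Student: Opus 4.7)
The plan is to invoke an exponential Efron--Stein inequality of Boucheron--Lugosi--Massart type, which converts a self-bounded variance inequality $V^{+} \leq \theta\, Z$ (for a nonnegative $Z$) into exactly the sub-gamma moment generating function bound claimed. Setting $Z = U$, and letting $Z_{i}'$ denote the version of $Z$ obtained by replacing the $i$th pair $(X_{i}, \zeta_{i})$ with an independent copy $(X_{i}', \zeta_{i}')$, define $V^{+} = \sum_{i=1}^{n} \E_{i}[(Z - Z_{i}')_{+}^{2}]$, where $\E_{i}$ integrates only over the replacement. Non-negativity $Z \geq 0$ is immediate because $h \equiv 0$ lies in $\operatorname{star}(\mathcal{H})$ and contributes $0$ to the supremum. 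The whole proof therefore reduces to establishing a self-bounded variance inequality $V^{+} \leq \theta\, U$ whose constant, when inserted into the exponential Efron--Stein sub-gamma bound, produces the claimed $\eta = 8(\sigma^{2}/\gamma + \gamma \kappa^{2})$.

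To derive the variance bound, I would let $h^{\star}$ be a measurable selection attaining the supremum in $U$ (otherwise work with $\varepsilon$-maximizers and pass $\varepsilon \downarrow 0$). Plugging $h^{\star}$ as a feasible point into the variational definition of $Z_{i}'$, every term in the sum over $j \neq i$ as well as both population terms $-\E[\zeta h^{\star}(X)]$ and $-\gamma\E[h^{\star}(X)^{2}]$ cancel, producing
\begin{equation*}
  Z - Z_{i}' \;\leq\; b_{i} - b_{i}', \qquad b_{i} := \zeta_{i}\,h^{\star}(X_{i}) - \gamma\,h^{\star}(X_{i})^{2}.
\end{equation*}
The boundedness hypotheses $\abs{\zeta} \leq \sigma$ and $\norm{h^{\star}}_{L_{\infty}(P_{X})} \leq \kappa$ give $b_{i}^{2} \leq 2(\sigma^{2} + \gamma^{2} \kappa^{2})\, h^{\star}(X_{i})^{2}$ and analogously $\E[(b_{i}')^{2}] \leq 2(\sigma^{2} + \gamma^{2} \kappa^{2})\, \E[h^{\star}(X)^{2}]$. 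Using $(b_{i} - b_{i}')^{2} \leq 2 b_{i}^{2} + 2 (b_{i}')^{2}$ and summing over $i$ yields
\begin{equation*}
  V^{+} \;\leq\; 4(\sigma^{2} + \gamma^{2} \kappa^{2})\,Q, \qquad Q := \sum_{i=1}^{n} h^{\star}(X_{i})^{2} + n\, \E[h^{\star}(X)^{2}].
\end{equation*}

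The decisive step is now to exploit the star-hull structure to bound $Q \leq U/\gamma$. Writing $h^{\star} = \lambda^{\star} h$ with $h \in \mathcal{H}$ and $\lambda^{\star} \in [0, 1]$, the objective is a downward concave quadratic in $\lambda^{\star}$, so the optimum lies either in the interior $(0, 1)$ --- where first-order stationarity forces the linear contribution $\sum_{i} \zeta_{i} h^{\star}(X_{i}) - n\E[\zeta h^{\star}(X)]$ to equal $2\gamma Q$, and hence $U = \gamma Q$ exactly --- or at the boundary $\lambda^{\star} = 1$, in which case the unconstrained optimizer exceeds $1$ and the linear contribution exceeds $2\gamma Q$, yielding $U \geq \gamma Q$. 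Either way $\gamma Q \leq U$, so $V^{+} \leq 4(\sigma^{2}/\gamma + \gamma \kappa^{2})\, U = (\eta/2)\, U$. Feeding this self-bounded estimate into the Boucheron--Lugosi--Massart sub-gamma MGF bound then yields the claimed inequality with $\eta = 8(\sigma^{2}/\gamma + \gamma \kappa^{2})$; the factor of $2$ between the self-bounded constant $\eta/2$ and $\eta$ is absorbed into the denominator of the sub-gamma tail.

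The main obstacle is the interior/boundary dichotomy in the previous paragraph, as this is the only place where including the \emph{population} quadratic term $-\gamma\E[h(X)^{2}]$ inside the supremum (the modification of offset Rademacher complexity relative to \citet*{liang2015learning}) plays an essential role: without it, the balancing identity would read $U = \gamma \sum_{i} h^{\star}(X_{i})^{2}$ and could not absorb the $n\E[h^{\star}(X)^{2}]$ contribution arising in $V^{+}$ through $\E_{i}[(b_{i}')^{2}]$. Measurability of the argmax is the only other technical nuisance and is resolved in the standard way via approximate maximizers.
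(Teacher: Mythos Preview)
Your proposal is correct and follows essentially the same route as the paper: exponential Efron--Stein, bounding $(U-U_i')_+$ by substituting the maximizer $h^\star$ into $U_i'$, the self-localization inequality $\gamma Q \le U$ extracted from the star-hull structure, and the self-referential MGF rearrangement that turns the self-bounding constant $\theta=\eta/2$ into the stated $\eta$. The only cosmetic difference is that the paper proves $\gamma Q \le U$ via the scaling inequality $A(\lambda\tilde h)-B(\lambda\tilde h)\le U$ for all $\lambda\in[0,1)$ and a limit $\lambda\to 1$, rather than your interior/boundary dichotomy; this avoids having to decompose $h^\star=\lambda^\star h$ with $h\in\mathcal H$ but is otherwise the same argument.
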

Before turning to the offset Rademacher complexity upper bounds, let us remark that
in the above moment bound, the variance proxy/variance factor (in the sense of
\citep[Section 2.4]{boucheron2013concentration}) is equal to $\eta \E U$; thus
the variance of the random variable $U$ is automatically controlled by its expectation.
In particular, the above bound can be transformed into deviation bounds of the form $U \leq 2\E[U] +
c \eta \log(1/\delta)$, where $\delta > 0$ is the confidence parameter. In
contrast, recall that the variance proxy in Talagrand's concentration
inequality \eqref{eq:talagrands-concentration} is not controlled by the
expectation of the corresponding empirical process, which in turn leads to the
localization machinery where Rademacher averages need to be computed over
explicitly constrained subsets of the function class of interest, and where the
Bernstein condition is imposed to compensate for the resulting variance terms.
On the other hand, using the above concentration result, our theory allows us to obtain
high probability bounds in terms of the offset Rademacher complexity without
relying on the Bernstein condition, as we
show in the following section.

\subsection{Exponential-Tail Offset Rademacher Complexity Bound}
\label{sec:main-results-rademacher-bound}

We now present the main result of this paper, the proof of which can be found
in Section~\ref{sec:proof-of-rademacher-bound}. The following theorem provides
an alternative to Theorem~\ref{thm:classical-localization}, but with Bernstein
condition replaced via the offset condition. As a consequence, the offset condition can
serve as a design principle for estimators in the regimes where the Bernstein
condition fails to hold; some examples are given in Section~\ref{sec:examples}.

\begin{theorem}
  \label{thm:rademacher-bound}
  Let $\widehat{f}$ be an estimator with range $\mathcal{F}$,
  where $\mathcal{F}$ denotes a class of functions mapping $\mathcal{X}$ to
  $[-b,b]$ for some $b > 0$. Let $P$ be any distribution supported on
  $\mathcal{X} \times [-b, b]$ and denote $g^{\star} \in \text{argmin}_{g \in
  \mathcal{G}} R(g)$, where $\mathcal{G}$ is some reference class of functions.
  Suppose that the following two conditions hold:
  \begin{enumerate}
    \item The loss function $\ell : [-b,b] \times [-b,b] \to [0, \infty)$ is
      $C_{b}$-Lipschitz in its first argument;
    \item The estimator $\widehat{f}$ satisfies the offset condition with
      respect to $(\mathcal{G}, \ell, \varepsilon, \gamma)$ for the
      distribution $P$, where $\varepsilon$ is some function mapping
      $[0,1]$ to $\R$ and $\gamma > 0$ is some positive real number.
  \end{enumerate}
  Then, for any $\delta_{1}, \delta_{2} \in (0,1)$ with probability at least
  $1- \delta_{1} - \delta_{2}$, we have
  \begin{equation}
    \mathcal{E}(\widehat{f}, \mathcal{G})
    \leq c_{1} C_{b}' \mathfrak{R}_{n}^{\text{off}}(P_{X}, \operatorname{star}(\mathcal{F} -
    g^{\star}),
    (C_{b}')^{-1}\gamma)
    + c_{2}\frac{\gamma^{-1}(C'_{b})^{2}\log(1/\delta_{1})}{n}
    + \varepsilon(\delta_{2}),
  \end{equation}
  where $c_{1},c_{2} > 0$ are some universal constants and $C_{b}' = C_{b} +
  \gamma b$.
\end{theorem}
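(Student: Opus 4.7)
The plan is to reduce the excess risk to the supremum of a single offset empirical process via the offset condition, and then to control the Laplace transform of that supremum by matching it against the offset multiplier process to which Proposition~\ref{prop:multiplier-concentration} applies. On the event of probability at least $1-\delta_2$ on which the offset condition holds for $\widehat{f}$, the algebraic decomposition
$\mathcal{E}(\widehat{f},\mathcal{G}) = (R-R_n)[\ell_{\widehat{f}}-\ell_{g^\star}] + (R_n(\widehat{f})-R_n(g^\star))$,
combined with the offset inequality applied to the last bracket, yields $\mathcal{E}(\widehat{f},\mathcal{G}) \leq Z + \varepsilon(\delta_2)$ where
$Z = \sup_{f\in\mathcal{F}} \bigl\{ (R-R_n)[\ell_f-\ell_{g^\star}] - \gamma\,\tfrac{1}{n}\sum_{i=1}^{n}(f(X_i)-g^\star(X_i))^2 \bigr\}$.
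Because $\mathcal{F}-g^\star \subseteq \operatorname{star}(\mathcal{F}-g^\star)$, the supremum is harmlessly enlarged to a supremum over the star-hull, matching the setting of Proposition~\ref{prop:multiplier-concentration}. It remains to show $Z$ satisfies an exponential tail bound of the advertised form with probability at least $1-\delta_1$.

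The idea is to control $\log \mathbb{E}\,e^{\lambda Z}$ and invert. A ghost-sample Jensen step replaces $R$ inside $Z$ by an independent empirical counterpart $R_n'$; pairwise exchangeability of $(X_i,X_i')$ then introduces i.i.d.\ Rademacher signs $\sigma_i$, and a Cauchy-Schwarz-based splitting of the resulting symmetrized process into two identically distributed halves produces $\mathbb{E}\,e^{\lambda Z} \leq \mathbb{E}\,e^{2\lambda W}$ for a Rademacher-multiplier offset process $W$. Care is required because $-\gamma\,\tfrac{1}{n}\sum(f-g^\star)^2(X_i)$ is not preserved by the pairwise $(X_i,X_i')$-swap; the crucial bookkeeping step is to carry the empirical and population quadratic offsets separately, which is exactly the reason for the extra $-\gamma\,\mathbb{E}[h(X)^2]$ term included in Definition~\ref{dfn:offset-complexity}. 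A Ledoux-Talagrand-type contraction at the level of the Laplace transform, applied sample-wise to the map $u\mapsto \ell(g^\star(X_i)+u,Y_i)-\ell(g^\star(X_i),Y_i)$ (which is $C_b$-Lipschitz) together with the boundedness $|f-g^\star|\leq 2b$ (which contributes an additive $\gamma b$ via the quadratic offset), then replaces loss increments by raw differences $f-g^\star$ at the cost of the multiplicative factor $C_b' = C_b + \gamma b$ on the Rademacher multipliers and a corresponding rescaling of the offset parameter from $\gamma$ to $(C_b')^{-1}\gamma$. The output is an MGF upper bound for $Z$ in terms of the MGF of precisely the process $U$ appearing in Proposition~\ref{prop:multiplier-concentration}, taken over $\operatorname{star}(\mathcal{F}-g^\star)$ with $\zeta_i = \sigma_i$ (so the hypotheses $\sigma\leq 1$, $\kappa\leq 2b$ are in force).

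Applying Proposition~\ref{prop:multiplier-concentration} to this $U$ gives the sub-gamma bound $\log \mathbb{E}\,e^{\lambda U} \leq \lambda\mathbb{E} U + \lambda^{2}\eta\,\mathbb{E} U/(2(1-\eta\lambda))$ with $\eta \asymp \gamma^{-1}(C_b')^2/n$ (after carrying through the $1/n$ and parameter rescalings), and with $\mathbb{E} U/n$ proportional to $C_b'\,\mathfrak{R}_n^{\text{off}}(P_X,\operatorname{star}(\mathcal{F}-g^\star),(C_b')^{-1}\gamma)$. A standard Chernoff inversion converts this into $U \leq 2\,\mathbb{E} U + c\,\eta\log(1/\delta_1)$ with probability at least $1-\delta_1$; pushing the estimate back through the Laplace-transform comparison of the previous paragraph yields the required tail bound on $Z$, and a union bound with the offset-condition event closes the argument. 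The main obstacle is the Laplace-transform-level symmetrization and contraction: both are routine at the level of expected suprema, but propagating them through the MGF while honestly preserving the two negative quadratic offsets required by Proposition~\ref{prop:multiplier-concentration} (empirical $-\gamma h(X_i)^2$ and population $-\gamma\,\mathbb{E}[h(X)^2]$) is what forces the bookkeeping described above and is the delicate technical heart of the proof.
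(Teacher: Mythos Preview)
Your proposal is correct and follows essentially the same route as the paper: reduce to $Z$ via the offset condition, symmetrize and contract the Laplace transform of $Z$ to reach an offset multiplier process, invoke Proposition~\ref{prop:multiplier-concentration}, and finish by Chernoff inversion plus a union bound. The one place where the paper is more explicit than your sketch is the bookkeeping you yourself flag as delicate: it folds $\tfrac{3\gamma}{4}(f-g^\star)^2$ into the function $\phi_f$ being symmetrized (so the contracted map is $(C_b+3\gamma b)$-Lipschitz rather than $C_b$-Lipschitz), and this specific split is exactly what makes both the empirical and the population quadratic offsets survive the ghost-sample/Cauchy--Schwarz step with equal coefficients, as Proposition~\ref{prop:multiplier-concentration} requires.
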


\begin{remark}
  \label{remark:lipschitz-parameter-size}
  In comparison with Theorem~\ref{thm:classical-localization}, the
  above result replaces $C_{b}$ with a worse constant $C_{b}' = C_{b} + \gamma
  b$. However, the primary application domain where the above theorems hold is
  the setting where for any $y \in [-b, b]$, the function
  $\ell(\cdot, y)$ is $C_{b}$-Lipschitz and $\gamma$-strongly convex
  in the fist argument (see Section~\ref{sec:examples} for examples).
  In such a setting it can be shown that $\gamma b \leq C_{b}$ and hence
  $C_{b}' \leq 2C_{b}$.
\end{remark}

\subsection{Recovering Local Rademacher Complexity Results Without The Bernstein Condition}
\label{sec:bernstein-vs-offset}

In this section, we discuss how Theorem~\ref{thm:rademacher-bound} yields
excess risk bounds that are no worse than the ones stated in
Theorem~\ref{thm:classical-localization}. We begin by stating the following
lemma, which is proved in Appendix~\ref{sec:proof-of-lemma-offset-not-worse}.

\begin{lemma}
  \label{lemma:offset-localization-not-worse}
  Let $P_{X}$ be any distribution supported on $\mathcal{X}$
  and let $\mathcal{H}$ be any star-shaped class of functions (i.e.,
  $\mathcal{H} = \operatorname{star}(\mathcal{H})$) mapping
  $\mathcal{X}$ to $\mathbb{R}$. Then, for any $\gamma > 0$ we have
  $$
    \mathfrak{R}_{n}^{\text{off}}(P_{X}, \mathcal{H}, \gamma)
    \leq \mathfrak{R}_{n}^{\text{loc}}(P_{X}, \mathcal{H}, \gamma).
  $$
\end{lemma}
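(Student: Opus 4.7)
Plan: By monotonicity of the map $r \mapsto \E \sup_{h \in \mathcal{H}(\gamma^{-1} r)} \tfrac{1}{n}\sum_{i=1}^{n}\sigma_{i} h(X_{i})/r$ (one may show $r \mapsto \phi_n(\gamma^{-1}r)/\sqrt{r}$ is non-increasing using star-shape), it suffices to fix any $r > r^* := \mathfrak{R}_{n}^{\text{loc}}(P_X,\mathcal{H},\gamma)$ satisfying the defining inequality $\E \sup_{h \in \mathcal{H}(\gamma^{-1} r)} \tfrac{1}{n}\sum_{i=1}^{n} \sigma_{i} h(X_{i}) \leq r$ and to prove $\mathfrak{R}_{n}^{\text{off}}(P_X,\mathcal{H},\gamma) \leq r$; sending $r \downarrow r^*$ then yields the claim. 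Abbreviate $A_h := \tfrac{1}{n}\sum_{i=1}^{n} \sigma_i h(X_i)$, $D_h := \tfrac{1}{n}\sum_{i=1}^{n} h(X_i)^2$, and $s_h := \E h(X)^2$.

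The core tool is a scaling argument enabled by star-shape. For any $h \in \mathcal{H}$, define $\lambda_h := \min\bigl(1, \sqrt{\gamma^{-1} r / s_h}\bigr) \in (0,1]$ and $\bar h := \lambda_h h$; by star-shape $\bar h \in \mathcal{H}$, while by construction $\bar h \in \mathcal{H}(\gamma^{-1} r)$. Setting $u := 1/\lambda_h \geq 1$ so that $h = u\bar h$, one obtains
\begin{equation*}
  A_h - \gamma D_h - \gamma s_h \;=\; u A_{\bar h} - u^{2}\gamma(D_{\bar h} + s_{\bar h}),
\end{equation*}
and completion of squares in $u$ yields $A_h - \gamma D_h - \gamma s_h \leq A_{\bar h}^{2}/\bigl(4\gamma(D_{\bar h} + s_{\bar h})\bigr)$. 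Two regimes now appear. When $s_h \leq \gamma^{-1} r$, $\bar h = h$ and the offset is trivially bounded by $A_{\bar h}$ (drop the non-positive quadratic terms). When $s_h > \gamma^{-1} r$, one has $s_{\bar h} = \gamma^{-1} r$ exactly and the bound $A_{\bar h}^{2}/\bigl(4(\gamma D_{\bar h} + r)\bigr)$ holds. Combining the regimes, $\sup_{h \in \mathcal{H}}\{A_h - \gamma D_h - \gamma s_h\}$ is pointwise dominated by a quantity depending only on $\bar h \in \mathcal{H}(\gamma^{-1} r)$.

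Taking expectations, the first regime is controlled directly by the choice of $r$. In the second regime the retained empirical quadratic $-\gamma D_{h}$ is essential: combining the AM--GM inequality $\gamma D_{\bar h} + r \geq 2\sqrt{\gamma r D_{\bar h}}$ with the Cauchy--Schwarz bound $A_{\bar h}^{2} \leq D_{\bar h}$ (which uses $\sigma_i^2 = 1$) reduces the quadratic-in-$A_{\bar h}$ term to one linear in $|A_{\bar h}|$ after rescaling, which is in turn controlled via Rademacher symmetry ($\E \sup |A_{\bar h}| \leq 2 \E \sup A_{\bar h}$) and the local complexity bound at radius $\gamma^{-1} r$.

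The main obstacle lies in the second regime: discarding the $\gamma D_{\bar h}$ contribution in the denominator would force one to bound $\E \sup_{\bar h \in \mathcal{H}(\gamma^{-1} r)} A_{\bar h}^{2}$, which does not reduce to $\E \sup A_{\bar h}$ without further concentration input. Thus the inclusion of \emph{both} the empirical quadratic $-\gamma D_h$ and the population quadratic $-\gamma s_h$ in the modified offset Rademacher definition -- the refinement over the original definition of \citet*{liang2015learning} -- is precisely what enables the clean comparison to the classical local Rademacher complexity.
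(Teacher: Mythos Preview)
Your regime-2 bound does not close. Following your chain of inequalities, the completion-of-squares bound together with AM--GM and Cauchy--Schwarz gives
\[
  \frac{A_{\bar h}^{2}}{4(\gamma D_{\bar h} + r)}
  \;\leq\;
  \frac{A_{\bar h}^{2}}{8\sqrt{\gamma r D_{\bar h}}}
  \;\leq\;
  \frac{|A_{\bar h}|\sqrt{D_{\bar h}}}{8\sqrt{\gamma r D_{\bar h}}}
  \;=\;
  \frac{|A_{\bar h}|}{8\sqrt{\gamma r}}.
\]
Taking suprema over $\bar h \in \mathcal{H}(\gamma^{-1}r)$ and expectations, and using the Rademacher symmetry bound $\E\sup|A_{\bar h}| \leq 2\E\sup A_{\bar h} \leq 2r$, the regime-2 contribution is at most $\tfrac{2r}{8\sqrt{\gamma r}} = \tfrac{1}{4}\sqrt{r/\gamma}$. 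This is \emph{not} bounded by $r$ unless $r \geq 1/(16\gamma)$, so for small local complexities the argument fails. The best you obtain is
\[
  \mathfrak{R}_{n}^{\text{off}}(P_{X},\mathcal{H},\gamma)
  \;\leq\;
  r^{\ast} + \tfrac{1}{4}\sqrt{r^{\ast}/\gamma},
\]
which is strictly weaker than the claimed inequality. The dimensional mismatch is intrinsic to completing the square over all $u$: you discard the information that $u \geq 1$ is a \emph{specific} value, and the resulting quadratic-over-quadratic ratio has the wrong homogeneity in $r$.

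The paper's proof avoids this by a different mechanism: it first uses star-shapedness to boost the penalty from $\gamma$ to $\lambda^{-1}\gamma$ (any $\lambda<1$) at the cost of a harmless factor $\lambda^{-1}$, then peels $\mathcal{H}$ into population-$L_{2}$ shells $\mathcal{H}_{k}$ at radii $\lambda^{-k}\gamma^{-1}r^{\ast}$. On each outer shell the Rademacher part is at most $\lambda^{-k}r^{\ast}$ (simply because $\lambda^{k}\mathcal{H}(0,\lambda^{-k}\gamma^{-1}r^{\ast}) \subseteq \mathcal{H}(0,\gamma^{-1}r^{\ast})$), while the boosted population penalty is at least $\lambda^{-1}\gamma \cdot \lambda^{1-k}\gamma^{-1}r^{\ast} = \lambda^{-k}r^{\ast}$, yielding an exact cancellation. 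The $\lambda$-boost is the missing ingredient: without it the penalty on shell $k$ would be only $\lambda^{1-k}r^{\ast}$ and the peeling sum would diverge --- which is the peeling analogue of the scaling gap in your argument.
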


An immediate consequence of the above lemma is the following corollary, which
shows that the classical local Rademacher complexity bounds hold when the
Bernstein condition is replaced via the \emph{estimator-dependent} offset condition.

\begin{corollary}
  \label{corollary:offset-to-classical}
  Consider the setting of Theorem~\ref{thm:rademacher-bound}.
  For any $\delta_{1}, \delta_{2} \in (0,1)$ with probability at least
  $1-\delta_{1}-\delta_{2}$, we have
  \begin{equation}
    \mathcal{E}(\widehat{f}, \mathcal{G})
    \leq c_{1} C_{b}' \mathfrak{R}_{n}^{\text{loc}}(P_{X}, \operatorname{star}(\mathcal{F} -
    g^{\star}),
    (C_{b}')^{-1}\gamma)
    + c_{2}\frac{\gamma^{-1}(C'_{b})^{2}\log(1/\delta_{1})}{n}
    + \varepsilon(\delta_{2}),
  \end{equation}
  where $c_{1},c_{2} > 0$ are some universal constants and $C_{b}' = C_{b} +
  \gamma b$.
\end{corollary}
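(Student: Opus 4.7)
The plan is to derive this corollary as an immediate combination of the two results established just above it. Under the hypotheses of Theorem~\ref{thm:rademacher-bound}, that theorem directly yields, with probability at least $1-\delta_1-\delta_2$, the excess risk bound
\[
  \mathcal{E}(\widehat{f}, \mathcal{G})
  \leq c_{1} C_{b}' \,\mathfrak{R}_{n}^{\text{off}}(P_{X}, \operatorname{star}(\mathcal{F} - g^{\star}), (C_{b}')^{-1}\gamma)
  + c_{2}\frac{\gamma^{-1}(C'_{b})^{2}\log(1/\delta_{1})}{n}
  + \varepsilon(\delta_{2}).
\]

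To convert the right-hand side into the desired bound, I would apply Lemma~\ref{lemma:offset-localization-not-worse} with $\mathcal{H} = \operatorname{star}(\mathcal{F} - g^{\star})$ and the parameter $(C_{b}')^{-1}\gamma$ in place of $\gamma$. The hypothesis that $\mathcal{H}$ be star-shaped is satisfied by construction, since taking the star-hull is idempotent: $\operatorname{star}(\operatorname{star}(\mathcal{F} - g^{\star})) = \operatorname{star}(\mathcal{F} - g^{\star})$. The lemma then gives
\[
  \mathfrak{R}_{n}^{\text{off}}(P_{X}, \operatorname{star}(\mathcal{F} - g^{\star}), (C_{b}')^{-1}\gamma)
  \leq \mathfrak{R}_{n}^{\text{loc}}(P_{X}, \operatorname{star}(\mathcal{F} - g^{\star}), (C_{b}')^{-1}\gamma).
\]
Substituting this upper bound into the previous display, and keeping the constants $c_{1}, c_{2}$ and $C_{b}' = C_{b} + \gamma b$ unchanged, delivers exactly the statement of the corollary.

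Since both ingredients are established prior to the corollary, there is no substantive obstacle — the argument is a two-line composition, and the only subtlety worth spelling out is checking the star-shaped hypothesis of Lemma~\ref{lemma:offset-localization-not-worse}, which is immediate from the idempotence of the star-hull operation.
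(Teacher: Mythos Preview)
Your proposal is correct and matches the paper's own treatment exactly: the paper presents Corollary~\ref{corollary:offset-to-classical} as an immediate consequence of Lemma~\ref{lemma:offset-localization-not-worse} applied to the bound of Theorem~\ref{thm:rademacher-bound}, which is precisely the two-step composition you describe. The only point you add beyond the paper's one-line justification is the explicit check that $\operatorname{star}(\mathcal{F} - g^{\star})$ is star-shaped via idempotence of the star-hull, which is correct and harmless.
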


It remains to discuss the relationship between the offset and Bernstein
conditions. A typical example where the Bernstein condition holds for
\emph{any} distribution $P$ is when $\mathcal{F} = \mathcal{G}$ is a convex class, and
the loss function is strongly convex. In such regimes, any empirical risk
minimizer over $\mathcal{F}$ satisfies the offset condition. Thus,
when applied to an empirical risk minimization estimator, the offset condition can be seen as
a dual condition to the Bernstein condition, where the roles played by
empirical and population quantities are interchanged.
We formalize this observation in the lemma below.

\begin{lemma}
  \label{lemma:bernstein-offset-duality}
  Let $\mathcal{F}$ be a class of functions mapping $\mathcal{X}$ to $\R$.
  Let $\ell : \mathcal{Y} \times \mathcal{Y} \to [0, \infty)$ be a loss
  function and let $\mathcal{P}_{\mathcal{X} \times \mathcal{Y}}$ be the
  set of all distributions $P$ supported on $\mathcal{X} \times \mathcal{Y}$.
  Let $f^{\star} = f^{\star}(\mathcal{F}, P, \ell)$ be any population risk minimizer
  over $\mathcal{F}$. Let $\widehat{f}^{\text{(ERM)}}$ be an estimator that
  returns any empirical risk minimizer in the class $\mathcal{F}$.
  If for any $P \in \mathcal{P}_{\mathcal{X} \times \mathcal{Y}}$
  the tuple $(P, \ell, \mathcal{F}, f^{\star})$ satisfies
  the Bernstein condition with parameter $\gamma$, then the estimator
  $\widehat{f}^{\text{(ERM)}}$ satisfies the deterministic offset condition with
  respect to $(\mathcal{F}, \ell, 0, \gamma)$.
\end{lemma}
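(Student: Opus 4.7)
The plan is to apply the Bernstein condition hypothesis not to the true sampling distribution $P$ but to the empirical distribution induced by the observed sample. Concretely, fix any sample $S_{n} = (X_{i}, Y_{i})_{i=1}^{n} \in (\mathcal{X} \times \mathcal{Y})^{n}$ and let $P_{n} = \frac{1}{n}\sum_{i=1}^{n} \delta_{(X_{i}, Y_{i})}$ denote its empirical distribution. Since $P_{n} \in \mathcal{P}_{\mathcal{X}\times\mathcal{Y}}$, the hypothesis guarantees that $(P_{n}, \ell, \mathcal{F}, f^{\star}(\mathcal{F}, P_{n}, \ell))$ satisfies the Bernstein condition with parameter $\gamma$.

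The key observation driving the proof is that under the empirical distribution $P_{n}$, the population risk functional coincides with the empirical risk functional $R_{n}$. Hence any empirical risk minimizer $\widehat{f}^{\text{(ERM)}} = \widehat{f}^{\text{(ERM)}}(S_{n}) \in \operatorname{argmin}_{f \in \mathcal{F}} R_{n}(f)$ is, by definition, a valid choice for the population risk minimizer $f^{\star}(\mathcal{F}, P_{n}, \ell)$ appearing in the Bernstein condition instantiated at $P_{n}$. Applying that instantiation with the test function taken to be the true population risk minimizer $f^{\star} = f^{\star}(\mathcal{F}, P, \ell) \in \mathcal{F}$, I obtain
\begin{equation*}
  \E_{X \sim (P_{n})_{X}}\bigl(f^{\star}(X) - \widehat{f}^{\text{(ERM)}}(X)\bigr)^{2}
  \leq \frac{1}{\gamma}\, \E_{(X,Y) \sim P_{n}}\bigl[\ell_{f^{\star}}(X,Y) - \ell_{\widehat{f}^{\text{(ERM)}}}(X,Y)\bigr].
\end{equation*}
Unpacking the empirical expectations and rearranging yields exactly the deterministic offset inequality required by Definition~\ref{dfn:offset-condition} with $\varepsilon = 0$ and parameter $\gamma$, for the particular sample $S_{n}$. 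Because $S_{n}$ was arbitrary, the deterministic offset condition follows.

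There is essentially no technical obstacle beyond the conceptual move of substituting $P \mapsto P_{n}$ and noticing that, under this substitution, the "$f^{\star}$" placeholder in the Bernstein hypothesis gets instantiated as $\widehat{f}^{\text{(ERM)}}$ while the ``test function'' role is played by the genuine population minimizer $f^{\star}$ of the original $P$. The Bernstein condition is used as a black box; no concentration, symmetrization, or empirical process argument enters. The only point worth flagging carefully in the write-up is the (standard) fact that a population risk minimizer over $\mathcal{F}$ with respect to $P_{n}$ always exists and can be chosen to be $\widehat{f}^{\text{(ERM)}}$, so that the hypothesis is applicable with this specific choice of the distinguished element of $\mathcal{F}$.
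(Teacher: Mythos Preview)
Your proposal is correct and follows essentially the same approach as the paper: apply the Bernstein condition hypothesis to the empirical measure $P_{n}$, observe that the population risk minimizer under $P_{n}$ is precisely $\widehat{f}^{\text{(ERM)}}$, and read off the offset inequality. Your write-up is in fact more explicit than the paper's own proof, which compresses the entire argument into two sentences and simply declares the two conditions equivalent once $P$ is replaced by $P_{n}$.
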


\begin{proof}
  Given an i.i.d.\ sample $S_{n} = (X_{i}, Y_{i})_{i=1}^{n}$ from some
  distribution $P \in \mathcal{P}_{\mathcal{X} \times \mathcal{Y}}$,
  let $P_{n}$ denote a distribution on $\mathcal{X} \times \mathcal{Y}$
  assigning equal mass to each $(X_{i}, Y_{i})$. Since $P_{n} \in
  \mathcal{P}_{\mathcal{X} \times \mathcal{Y}}$, by the assumption of this
  lemma $(P_{n}, \ell, \mathcal{F}, \widehat{f}^{(ERM)}(S_{n}))$ satisfies the
  Bernstein condition with parameter $\gamma$. This is equivalent to saying
  that $\widehat{f}^{\text{(ERM)}}$ satisfies the deterministic offset
  condition with respect to $(\mathcal{F}, \ell, 0, \gamma)$.
\end{proof}

Let us conclude this section by highlighting one difference between the offset
and Bernstein conditions.
In some settings, the Berstein condition is used as a \emph{distributional}
assumption, which imposes constraints on the data distribution itself --
as opposed to \emph{distribution-free} results, holding for any distribution.
For example, in the classification setting
with zero-one loss, the Bernstein condition corresponds to bounded noise assumptions (see the
discussions in \citep*{boucheron2005theory}), under which empirical risk
minimization estimator can achieve fast rates of convergence of the excess risk.
For sharp treatment of the classification setting under the bounded noise
assumptions via ideas related to offset Rademacher averages, see
\citep*{zhivotovskiy2018localization}. At the same time, let us remark that
the offset condition can be exploited to design statistical estimators
that achieve fast rates in the classification setting in a distribution-free
sense (i.e., without bounded noise assumptions),
provided an option to abstain from prediction exists; for an extended discussion see
\citep*{bousquetfast}.


\section{Examples}
\label{sec:examples}

In this section, we discuss some applications of our theory to problems where
the Bernstein condition does not hold, yet there exist estimators
that satisfy the offset condition. As a result, sharp deviation-optimal
excess risk rates can be obtained for such estimators via the theory developed
in this paper.

For any function class $\mathcal{H}$ mapping $\mathcal{X}$ to $\R$ and
any sample $S_{n}^{X} = (X_{i})_{i=1}^{n}$, where $X_{i} \in \mathcal{X}$,
define
$$
  \mathfrak{R}^{\text{off}}(S_{n}^{X}, \mathcal{H}, \gamma)
  = \E_{\sigma}\left[
      \sup_{h \in \mathcal{H}}\left\{
        \frac{1}{n}
        \sum_{i=1}^{n} \sigma_{i}h(X_{i}) - \gamma h(X_{i})^{2}
      \right\}
      \bigg\vert S_{n}^{X}
    \right],
$$
where $\sigma = (\sigma_{1}, \dots, \sigma_{n})$ denotes a sequence of i.i.d.\
Rademacher random variables. Observe, in particular, that for any distribution
$P_{X}$ supported on $\mathcal{X}$, we have
\begin{align}
    \label{eq:offset-complexity-population-to-empirical}
    \mathfrak{R}^{\text{off}}_{n}(P_{X}, \mathcal{H}, \gamma)
    &\leq \E_{S_{n}^{X}}\left[\mathfrak{R}^{\text{off}}(S_{n}^{X}, \mathcal{H},
       \gamma)\right].
\end{align}
Thus, upper bounds on \emph{empirical} offset Rademacher complexity
$\mathfrak{R}^{\text{off}}_{n}(S_{n}^{X}, \mathcal{H}, \gamma)$
imply corresponding upper bounds on the offset Rademacher complexity.
Let us now state a bound on $\mathfrak{R}^{\text{off}}_{n}(S_{n}^{X}, \mathcal{H}, \gamma)$
for sparse linear classes, which will be used to yield sharp bounds for the
examples considered in this section.
\begin{lemma}
  \label{lemma:sparse-offset-complexity}
  For any $w \in \R^{d}$ let $\|w\|_{0}$ denote the number of non-zero
  coordinates of $w$. Denote a class of $k$-sparse linear predictors
  by
  $$
    \mathcal{H}_{\text{lin}}^{d,k}
    =
    \{\ip{w}{\cdot} : w \in \R^{d}, \|w\|_{0} \leq k\}.
  $$
  Let $S_{n}^{\Phi} = (\Phi_{i})_{i=1}^{n}$, where $\Phi_{i} \in \R^{d}$ are
  arbitrary. Then, for any $\gamma > 0$ we have
  $$
    \mathfrak{R}^{\text{off}}(S_{n}^{\Phi}, \mathcal{H}_{\text{lin}}^{d,k}, \gamma)
    \lesssim
    \frac{1}{\gamma}\log\left(\frac{ed}{k}\right)\frac{k}{n}.
  $$
\end{lemma}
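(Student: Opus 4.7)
The plan is to exploit the combinatorial $\ell_0$ structure: parametrize the supremum by the support $S \subseteq [d]$ of $w$, solve the inner quadratic supremum over $w_S$ in closed form for each $S$, and then bound the resulting maximum of $\binom{d}{k}$ quadratic forms in the Rademacher vector via a Hanson-Wright inequality combined with a union bound.

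First, since any $w$ with $\|w\|_0 \leq k$ is supported in some size-$k$ set, I would write
$$
  \mathfrak{R}^{\text{off}}(S_n^\Phi, \mathcal{H}_{\text{lin}}^{d,k}, \gamma)
  = \frac{1}{n}\,\E_\sigma \max_{|S|=k}\, \sup_{w_S \in \R^S}\Bigl\{\langle \Psi_S^\top \sigma, w_S\rangle - \gamma\|\Psi_S w_S\|^2\Bigr\},
$$
where $\Psi_S \in \R^{n \times k}$ stacks the restricted features $\Phi_i^S$ as rows. Reparameterizing by $u = \Psi_S w_S$, which ranges over the column space $V_S := \operatorname{col}(\Psi_S)$ of dimension at most $k$, and completing the square yields
$$
  \sup_{u \in V_S}\bigl\{\langle u,\sigma\rangle - \gamma\|u\|^2\bigr\} = \frac{\|P_{V_S}\sigma\|^2}{4\gamma},
$$
where $P_{V_S}$ is the orthogonal projection onto $V_S$. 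This conveniently sidesteps possible singularity of $\Psi_S^\top \Psi_S$ (e.g.\ when $n < k$) without introducing pseudoinverses, and reduces the target quantity to
$$
  \mathfrak{R}^{\text{off}}(S_n^\Phi, \mathcal{H}_{\text{lin}}^{d,k}, \gamma) = \frac{1}{4\gamma n}\,\E_\sigma \max_{|S|=k}\|P_{V_S}\sigma\|^2.
$$

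For each fixed $S$, the quantity $\sigma^\top P_{V_S}\sigma$ is a quadratic form in a Rademacher vector with mean $\operatorname{tr}(P_{V_S}) \leq k$, squared Frobenius norm $\operatorname{tr}(P_{V_S}^2) = \operatorname{tr}(P_{V_S}) \leq k$, and operator norm $1$. A sub-Gaussian Hanson-Wright bound (in Hsu-Kakade-Zhang form) then gives, for all $t \geq 0$,
$$
  \P\bigl(\sigma^\top P_{V_S}\sigma > k + 2\sqrt{kt} + 2t\bigr) \leq e^{-t}.
$$
I would apply a union bound over the $\binom{d}{k} \leq (ed/k)^k$ supports and substitute $t \mapsto t + k\log(ed/k)$, which produces a single sub-exponential tail estimate for $\max_{|S|=k}\|P_{V_S}\sigma\|^2$ centered at a quantity of order $k\log(ed/k)$. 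Integrating the tail gives $\E \max_{|S|=k}\|P_{V_S}\sigma\|^2 \lesssim k\log(ed/k)$, and dividing by $4\gamma n$ yields the claimed bound.

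The substantive step is the closed-form reduction in the first paragraph; once the objective is recognised as the squared norm of a rank-$\leq k$ projection applied to a Rademacher vector, the rest is routine. The one real care point is ensuring the Hanson-Wright constants do not depend on $n$ or $d$, which is guaranteed because the entries of $\sigma$ are uniformly $1$-sub-Gaussian and the relevant matrix norms of a projection are controlled purely by its rank.
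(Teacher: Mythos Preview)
Your proof is correct. Both you and the paper make the same key reduction: split the supremum by support $S$, then recognize that the inner quadratic maximization yields $(4\gamma)^{-1}\sigma^\top P_S\sigma$ for the orthogonal projection $P_S$ onto the column space of the restricted design, so the problem becomes bounding $\E_\sigma\max_{S}\sigma^\top P_S\sigma$ over $\binom{d}{k}$ rank-$\leq k$ projections. The difference is purely in how this last quantity is controlled. The paper uses the exponential moment method: it applies Jensen to $e^{\lambda(\cdot)}$, replaces $\max$ by a sum, and then bounds the MGF of each Rademacher chaos $\sigma^\top P_S\sigma$ via decoupling followed by comparison with Gaussian chaos and a Gaussian chaos MGF estimate. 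You instead invoke a Hanson--Wright tail bound for each $\sigma^\top P_S\sigma$ directly, union-bound, and integrate. Your route is somewhat more streamlined---it avoids the decoupling and Gaussian comparison steps, and your reparametrization $u=\Psi_S w_S$ cleanly sidesteps the pseudoinverse the paper introduces---while the paper's MGF approach is perhaps closer in spirit to how such bounds are sometimes chained in more complex settings. One minor quibble: the Hsu--Kakade--Zhang bound with the specific constants you quote is stated for Gaussian vectors; for Rademacher entries you should appeal to the sub-Gaussian Hanson--Wright inequality (e.g.\ Rudelson--Vershynin), which gives the same conclusion up to universal constants and is all that the $\lesssim$ claim requires.
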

The above lemma is proved in Section~\ref{sec:proof-of-sparsity-lemma} via a
direct argument involving comparison inequalities for Rademacher and Gaussian
chaos. As an immediate consequence, let us state the
following corollary that will simplify the exposition of the
applications to follow.

\begin{corollary}
  \label{corollary:sparsity}
  Let $\mathcal{G} = \{g_{1}, \dots, g_{m}\}$ denote a finite class of
  arbitrary functions mapping $\mathcal{X}$ to $\R$.
  For any positive integer $k \in \{1, \dots, m\}$ define the function class
  containing $k$-sparse linear combinations of elements of $\mathcal{G}$ by
  $$
    \mathcal{G}_{\text{lin}}^{k}
    =
    \left\{
      g_{w}(\cdot) = \sum_{i=1}^{m} w_{i} g_{i}(\cdot)
      :
      w \in \R^{d} \text{ and }
      \|w\|_{0} \leq k
    \right\}.
  $$
  Let $k_{1},k_{2} \in \{1, \dots, m\}$, $\mathcal{F} =
  \mathcal{G}_{\text{lin}}^{k_{1}}$, and fix any $g^{\star} \in
  \mathcal{G}_{\text{lin}}^{k_{2}}$. Then, for any distribution $P_{X}$
  supported on $\mathcal{X}$ and for any $\gamma > 0$ we have
  $$
    \mathfrak{R}^{\text{off}}_{n}(
      P_{X}, \operatorname{star}(\mathcal{F} - g^{\star}),
    \gamma)
    \lesssim
    \frac{1}{\gamma}\log\left(\frac{em}{(k_{1} + k_{2})}\right)\frac{(k_{1} +
    k_{2})}{n}.
  $$
\end{corollary}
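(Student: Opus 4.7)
The plan is to reduce Corollary~\ref{corollary:sparsity} to Lemma~\ref{lemma:sparse-offset-complexity} via a change-of-basis argument, together with the population-to-empirical bound \eqref{eq:offset-complexity-population-to-empirical}.

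First I would observe that every element of $\mathcal{F} - g^{\star}$ is a linear combination of the dictionary $\mathcal{G}$ with at most $k_{1} + k_{2}$ nonzero coefficients. Indeed, if $f = g_{w} \in \mathcal{G}_{\text{lin}}^{k_{1}}$ with $\|w\|_{0} \leq k_{1}$ and $g^{\star} = g_{w^{\star}}$ with $\|w^{\star}\|_{0} \leq k_{2}$, then $f - g^{\star} = g_{w - w^{\star}}$ and $\|w - w^{\star}\|_{0} \leq k_{1} + k_{2}$. Moreover, the star-hull operation preserves sparsity: multiplying $w - w^{\star}$ by any $\lambda \in [0,1]$ does not increase the number of nonzero coordinates. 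Therefore
\[
  \operatorname{star}(\mathcal{F} - g^{\star}) \subseteq \mathcal{G}_{\text{lin}}^{k_{1} + k_{2}}.
\]
Since the offset Rademacher complexity is monotone in the function class (both empirical and population versions contain a single supremum over the class), it suffices to bound $\mathfrak{R}_{n}^{\text{off}}(P_{X}, \mathcal{G}_{\text{lin}}^{k_{1} + k_{2}}, \gamma)$.

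Next I would apply the feature-map reparametrization. Define $\Phi : \mathcal{X} \to \R^{m}$ by $\Phi(x) = (g_{1}(x), \dots, g_{m}(x))$, so that for every $g_{w} \in \mathcal{G}_{\text{lin}}^{k_{1}+k_{2}}$ we have $g_{w}(x) = \langle w, \Phi(x) \rangle$ with $\|w\|_{0} \leq k_{1} + k_{2}$. Given an i.i.d.\ sample $S_{n}^{X} = (X_{i})_{i=1}^{n}$, set $\Phi_{i} = \Phi(X_{i})$ and $S_{n}^{\Phi} = (\Phi_{i})_{i=1}^{n}$. Then under this pushforward the class $\mathcal{G}_{\text{lin}}^{k_{1}+k_{2}}$ evaluated on $S_{n}^{X}$ is identified with $\mathcal{H}_{\text{lin}}^{m, k_{1}+k_{2}}$ evaluated on $S_{n}^{\Phi}$, and consequently
\[
  \mathfrak{R}^{\text{off}}(S_{n}^{X}, \mathcal{G}_{\text{lin}}^{k_{1}+k_{2}}, \gamma)
  = \mathfrak{R}^{\text{off}}(S_{n}^{\Phi}, \mathcal{H}_{\text{lin}}^{m, k_{1}+k_{2}}, \gamma).
\]
Lemma~\ref{lemma:sparse-offset-complexity} applied with $d = m$ and $k = k_{1} + k_{2}$ then yields, pointwise in $S_{n}^{X}$,
\[
  \mathfrak{R}^{\text{off}}(S_{n}^{X}, \mathcal{G}_{\text{lin}}^{k_{1}+k_{2}}, \gamma)
  \lesssim \frac{1}{\gamma} \log\!\left(\frac{em}{k_{1}+k_{2}}\right) \frac{k_{1}+k_{2}}{n}.
\]

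Finally, using the monotonicity inclusion above and taking expectations over $S_{n}^{X}$, I would invoke \eqref{eq:offset-complexity-population-to-empirical} to pass from the empirical offset Rademacher complexity to its population counterpart $\mathfrak{R}_{n}^{\text{off}}(P_{X}, \operatorname{star}(\mathcal{F} - g^{\star}), \gamma)$, which yields the claimed bound. The only nontrivial step is Lemma~\ref{lemma:sparse-offset-complexity} itself (already established), so no genuine obstacle arises here — the proof is essentially a clean packaging of the sparse feature-map reduction, with care taken to note that both the difference $\mathcal{F} - g^{\star}$ and its star-hull remain sparse with sparsity at most $k_{1} + k_{2}$.
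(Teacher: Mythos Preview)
Your proposal is correct and follows essentially the same approach as the paper: you establish the inclusion $\operatorname{star}(\mathcal{F}-g^{\star})\subseteq\mathcal{G}_{\text{lin}}^{k_{1}+k_{2}}$, pass to the empirical offset complexity via \eqref{eq:offset-complexity-population-to-empirical}, and then invoke Lemma~\ref{lemma:sparse-offset-complexity} through the feature map $\Phi(x)=(g_{1}(x),\dots,g_{m}(x))$. The paper's proof is identical in structure, differing only in presentation order and notation.
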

\begin{proof}
  Let $k = k_{1} + k_{2}$ and note that
  $\operatorname{star}(\mathcal{F} - g^{\star}) \subseteq
  \mathcal{G}_{\text{lin}}^{k}$. Hence, the bound
  \eqref{eq:offset-complexity-population-to-empirical} yields
  \begin{equation}
    \label{eq:sparsity-corollary-first-inequality}
    \mathfrak{R}^{\text{off}}_{n}(
      P_{X}, \operatorname{star}(\mathcal{F} - g^{\star}),
    \gamma)
    \leq
    \mathfrak{R}^{\text{off}}_{n}(
      P_{X}, \mathcal{G}_{\text{lin}}^{k},
    \gamma)
    \leq \E_{S_{n}^{X}}\left[\mathfrak{R}^{\text{off}}(S_{n}^{X},
      \mathcal{G}_{\text{lin}}^{k}, \gamma)\right].
  \end{equation}
  For any sample $S_{n}^{X}$ and any $i=1,\dots,n$ define $\Phi^{X}_{i} \in \R^{m}$
  by $(\Phi^{X}_{i})_{j} = g_{j}(X_{i})$. Then, for any $w \in \R^{d}$ and $g_{w} =
  \sum_{i=1}^{m} w_{i}g_{i}$ we have $g_{w}(X_{i}) = \sum_{j=1}^{m}
  w_{j}g_{j}(X_{i}) = \langle w, \Phi^{X}_{i} \rangle$. Hence, letting
  $S_{n}^{\Phi}(S_{n}^{X}) = (\Phi_{i}^{X})_{i=1}^{n}$ and applying
  Lemma~\ref{lemma:sparse-offset-complexity} yields
  $$
    \mathfrak{R}^{\text{off}}(S_{n}^{X}, \mathcal{G}_{\text{lin}}^{k}, \gamma)
    =
    \mathfrak{R}^{\text{off}}(S_{n}^{\Phi}(S_{n}^{X}), \mathcal{F}_{\text{lin}}^{m, k}, \gamma)
    \lesssim
    \frac{1}{\gamma}\log\left(\frac{em}{k}\right)\frac{k}{n}.
  $$
  Plugging in the above inequality into
  \eqref{eq:sparsity-corollary-first-inequality} completes the proof.
\end{proof}

We now turn to the example applications.

\subsection{Model Selection Aggregation}
\label{sec:model-selection-aggregation}

In a model selection aggregation problem, we are given a finite
dictionary $\mathcal{G} = \{g_{1}, \dots, g_{m}\}$ of functions mapping
$\mathcal{X}$ to $[-b, b]$. Given a sample $S_{n} = (X_{i}, Y_{i})_{i=1}^{n}$,
a statistical estimator $\widehat{f}$ aims to construct a new function such
that the excess risk $\mathcal{E}(\widehat{f}, \mathcal{G})$ is small with high
probability.

In what follows, we consider loss functions
$\ell : [-b, b] \times [-b, b] \to [0, \infty)$ that
are $C_{b}$-Lipschitz and $\gamma$-strongly convex in the first coordinate.
More precisely, we assume that for any $y,y_{1},y_{2} \in [-b, b]$ we have
$\abs{\ell(y_{1}, y) - \ell(y_{2}, y)} \leq C_{b}\abs{y_{1} - y_{2}}$ and for
any $\lambda \in [0, 1]$ we have
$\ell(\lambda y_{1} + (1-\lambda)y_{2}, y) \leq \lambda \ell(y_{1}, y) +
(1-\lambda) \ell(y_{2}, y) - \frac{\gamma}{2} \lambda(1-\lambda)(y_{1} - y_{2})^{2}$.

An identical setup to the one described above was recently treated by
\citet*{lecue2014optimal, wintenberger2017optimal}. Optimal model selection
aggregation rates $\gamma^{-1}C_{b}^{2}\log(m/\delta)/n$
were obtained therein for the $Q$-aggregation and online Bernstein
aggregation procedures. Below, we show how the offset Rademacher complexity
analysis yields the same rates for two other estimators: Audibert's star
algorithm and the midpoint estimator.

\paragraph{Audibert's Star Algorithm.}
The star algorithm due to \citep{audibert2008progressive} is defined by
$$
  \widehat{f}^{\text{(star)}}
  = \operatorname{argmin}_{f \in \mathcal{G}, \lambda \in [0, 1]}
  R_{n}(\lambda \widehat{f}^\text{(ERM)} + (1-\lambda)f),
  \,\,\text{where}\,\,
  \widehat{f}^{\text{(ERM)}}
  = \operatorname{argmin}_{f \in \mathcal{G}}
  R_{n}(f).
$$

Generalizing an argument of \citet*[Lemma 1]{liang2015learning}, the recent
work \citet*[Proposition 5]{vijaykumar2021localization} shows that
$\widehat{f}^{\text{(star)}}$ satisfies the $(\mathcal{G}, \ell, 0,
c\gamma)$-deterministic offset condition, where $c > 0$ is some universal
constant.

In the view of Corollary~\ref{corollary:sparsity}, the range of the
star estimator $\widehat{f}^{\text{(star)}}$ is equal to
$\{\lambda g + (1-\lambda)g' : g,g' \in \mathcal{G}, \lambda \in [0,1]\}
\subseteq \mathcal{G}_{\text{lin}}^{2}$. Thus, combining
Theorem~\ref{thm:rademacher-bound} (see also
Remark~\ref{remark:lipschitz-parameter-size}) and
Corollary~\ref{corollary:sparsity} yields, for any $\delta \in
(0,1)$ with probability at least $1-\delta$
$$
  \mathcal{E}(f^{\text{(star)}}, \mathcal{G})
  \lesssim \gamma^{-1}C_{b}^{2}\frac{\log(m/\delta)}{n}.
$$

\paragraph{Midpoint Estimator.}
Let $c_{1} > 0$ be some sufficiently large universal constant (as elaborated in
the proof of Lemma~\ref{lemma:midpoint-estimator-is-offset}).
For any $\delta \in (0,1)$, the midpoint estimator is defined by
$$
  \widehat{f}_{\delta}^{\text{(mid)}}
  = \operatorname{argmin}_{f \in \mathcal{G}_{\delta, c_{1}}(S_{n})}
  R_{n}\bigg(\frac{\widehat{f}^\text{(ERM)} + f}{2}\bigg),
$$
where $\widehat{f}^{\text{(ERM)}} =
\widehat{f}^{\text{(ERM)}}(S_{n})$ is any function in $\mathcal{G}$ that
minimizes the empirical risk $R_{n}(\cdot)$ (induced by the sample $S_{n}$)
and the set $\mathcal{G}_{\delta, c_{1}}(S_{n})$ is a random (data-dependent) set of
\emph{almost empirical risk minimizers} defined by
\begin{align*}
  &\mathcal{G}_{\delta, c_{1}}(S_{n})
  = \{g \in \mathcal{G}
  : R_{n}(g) \leq R_{n}(f^{\text{(ERM)}})
  + c_{1}C_{b}d_{\delta, n}(\widehat{f}^{\text{(ERM)}}, g)\}
\end{align*}
with the empirical distance function $d_{\delta, n}$ given by, for any functions $g,g'$:
$$
  d_{\delta, n}(g, g') = \sqrt{
    \frac{n^{-1}\sum_{i=1}^{n}(g(X_{i}) - g'(X_{i}))^{2}\cdot\log(2m/\delta)}
    {n}
  }
  + \frac{b\log(2m/\delta)}{n}.
$$
In the context of model selection aggregation, the idea of applying empirical
risk minimization over some set preselected set of almost minimizers goes back to
\citet*{lecue2009aggregation}. For the recent use of midpoint procedures in
statistical literature, see, for example,
\citep*{mendelson2019unrestricted, bousquetfast, mourtada2021distribution}.

Since $\widehat{f}^{\text{(mid)}}$ outputs $2$-sparse convex combinations of
elements of the dictionary $\mathcal{G}$, similarly to the above analysis of
Audibert's star algorithm, it is enough to establish that
$\widehat{f}^{\text{(mid)}}$ satisfies the offset condition. For the midpoint
estimator, this fact is already implicit in the proofs of
\citet*{pmlr-v134-puchkin21a} in the context of active learning.
While, admittedly, the direct analysis of the midpoint estimator is no more
difficult than establishing the below lemma, for exposition purposes,
let us demonstrate that $\widehat{f}^{\text{(min)}}$ does indeed satisfy the
offset condition.
\begin{lemma}
  \label{lemma:midpoint-estimator-is-offset}
  Fix any $\delta \in (0,1)$ and any distribution $P$ supported on
  $\mathcal{X} \times [-b, b]$.
  In the setup described above, the estimator $\widehat{f}_{\delta}^{(mid)}$
  satisfies the $(\mathcal{G}, \ell, \varepsilon, (64)^{-1}\gamma)$-offset
  condition for the distribution $P$,
  with $\varepsilon(\delta) \lesssim C_{b}^{2}\gamma^{-1}\log(2m/\delta)/n$.
\end{lemma}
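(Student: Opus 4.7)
The plan is to work on the high-probability event that $g^\star$ lies in the data-dependent set $\mathcal{G}_{\delta, c_1}(S_n)$ of almost empirical risk minimisers. To secure this, I would apply Bernstein's inequality to each of the $m$ centred variables $\ell_g(X_i, Y_i) - \ell_{g^\star}(X_i, Y_i)$, whose variance is at most $C_b^2 \E(g - g^\star)^2$ by Lipschitzness, together with a second Bernstein event comparing $\E(g-g^\star)^2$ with its empirical analogue $n^{-1}\sum_i (g(X_i) - g^\star(X_i))^2$. A union bound over $\mathcal{G}$ and a large enough choice of $c_1$ then yields, with probability at least $1-\delta$, the simultaneous estimate $R_n(g^\star) - R_n(g) \leq c_1 C_b \, d_{\delta,n}(g, g^\star)$ for every $g \in \mathcal{G}$; specialising to $g = A := \widehat{f}^{(\text{ERM})}$ gives $g^\star \in \mathcal{G}_{\delta, c_1}(S_n)$.

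On this event, let $B^\star$ denote the minimiser selected in the definition of the midpoint estimator, so that $M := \widehat{f}_\delta^{(\text{mid})} = (A + B^\star)/2$. Since $g^\star$ is now a feasible competitor, the minimisation property combined with $\gamma$-strong convexity of $\ell$ at $\lambda = 1/2$ gives $R_n(M) \leq R_n((A+g^\star)/2) \leq \tfrac12(R_n(A) + R_n(g^\star)) - \tfrac{\gamma}{8n} T_{Ag}$, where $T_{Ag} := \sum_i (A(X_i) - g^\star(X_i))^2$. The ERM inequality $R_n(A) \leq R_n(g^\star)$ then reduces this to the base estimate $R_n(M) - R_n(g^\star) \leq -\tfrac{\gamma}{8n} T_{Ag}$. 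To convert it into an $M$-centred penalty I would use the elementary bound $(M - g^\star)^2 \leq \tfrac32(A - g^\star)^2 + (A - B^\star)^2$, which follows from $M - g^\star = \tfrac12(A-g^\star) + \tfrac12(B^\star - g^\star)$ together with $(B^\star - g^\star)^2 \leq 2(A - g^\star)^2 + 2(A - B^\star)^2$; this yields
\[
R_n(M) - R_n(g^\star) \leq -\tfrac{\gamma}{12 n}\, T_{Mg} + \tfrac{\gamma}{12 n}\, T_{AB},
\]
so the remaining task is to control $T_{AB} := \sum_i (A(X_i) - B^\star(X_i))^2$.

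The key trick is to invoke strong convexity a second time, now applied to the midpoint $(A, B^\star) \mapsto M$: this produces $R_n(A) + R_n(B^\star) - 2 R_n(M) \geq \tfrac{\gamma}{4n}\, T_{AB}$. Writing the left-hand side as $(R_n(B^\star) - R_n(A)) + 2(R_n(A) - R_n(M))$ and bounding $R_n(B^\star) - R_n(A) \leq c_1 C_b\, d_{\delta, n}(A, B^\star)$ (since $B^\star \in \mathcal{G}_{\delta, c_1}$) and $R_n(A) - R_n(M) \leq R_n(g^\star) - R_n(M)$ (via the ERM inequality) gives $\tfrac{\gamma}{4n} T_{AB} \leq 2(R_n(g^\star) - R_n(M)) + c_1 C_b \, d_{\delta, n}(A, B^\star)$. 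Expanding $d_{\delta, n}(A, B^\star) = \sqrt{L T_{AB}}/n + bL/n$ with $L := \log(2m/\delta)$, a Young split absorbs the $\sqrt{L T_{AB}}$ contribution back into $\tfrac{\gamma}{8n} T_{AB}$ and leaves a $C_b^2 L/(\gamma n)$ residue (using $\gamma b \lesssim C_b$ from Remark~\ref{remark:lipschitz-parameter-size}), yielding the self-bounded estimate
\[
T_{AB} \,\lesssim\, \frac{n}{\gamma}\,\bigl(R_n(g^\star) - R_n(M)\bigr) + \frac{C_b^2 L}{\gamma^{2}}.
\]

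Plugging this back into the previous display and moving the induced multiple of $R_n(g^\star) - R_n(M)$ to the left-hand side collapses everything to an inequality of the form $(1 + c_\star)\bigl(R_n(M) - R_n(g^\star)\bigr) + \tfrac{\gamma}{12n} T_{Mg} \leq c_\sharp \, C_b^2 L/(\gamma n)$, with $c_\star, c_\sharp$ absolute constants; dividing by $1 + c_\star$ delivers the offset condition with quadratic constant a positive absolute multiple of $\gamma$ (tight enough to produce $\gamma/64$) and slack $\varepsilon(\delta) \lesssim C_b^2 \gamma^{-1} \log(2m/\delta)/n$. I expect the main obstacle to be precisely the self-bounded estimate for $T_{AB}$: non-convexity of $\mathcal{G}$ forbids any direct empirical $L_2$ comparison of the two dictionary elements $A$ and $B^\star$, and the almost-ERM condition re-couples $T_{AB}$ to itself through the $\sqrt{L T_{AB}}$ factor inside $d_{\delta, n}$. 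The Young-inequality split in the third paragraph is what breaks this loop and makes the overall argument go through.
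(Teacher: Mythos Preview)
Your argument is correct, and it reaches the offset condition with a quadratic coefficient of order $\gamma/28$, comfortably better than the stated $\gamma/64$. However, it proceeds along a different route from the paper's proof.

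The paper argues via the empirical $L_2$ \emph{diameter} $D_n^{\max}$ of the almost-minimiser set $\mathcal{G}_{\delta,c_1}(S_n)$. Having placed $g^\star$ in this set (via the empirical Bernstein inequality rather than your two-event population Bernstein plus norm comparison, though both work), the paper picks as the competitor in the minimisation not $g^\star$ but some $g'$ at empirical distance at least $D_n^{\max}/4$ from $A=\widehat f^{(\mathrm{ERM})}$; a single application of strong convexity then yields $R_n(\widehat f^{(\mathrm{mid})})-R_n(g^\star)\le -\tfrac{\gamma}{32}D_n^{\max}$ plus the almost-minimiser slack $c_1C_b\,d_{\delta,n}(A,g')$. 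The slack is absorbed into half of the negative term by Young's inequality, and the remaining $-\tfrac{\gamma}{64}D_n^{\max}$ dominates $-\tfrac{\gamma}{64}\|\widehat f^{(\mathrm{mid})}-g^\star\|_n^2$ because the midpoint of two elements of $\mathcal{G}_{\delta,c_1}$ is within $\sqrt{D_n^{\max}}$ of $g^\star$.

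Your route instead uses $g^\star$ as the competitor to obtain a penalty in $T_{Ag}$, converts it to a penalty in $T_{Mg}$ at the cost of a $T_{AB}$ term, and then closes the loop by a \emph{second} application of strong convexity at $(A,B^\star)$ together with the almost-minimiser bound on $R_n(B^\star)-R_n(A)$; the self-referential $\sqrt{LT_{AB}}$ is broken by Young's inequality and the residual $R_n(g^\star)-R_n(M)$ term is moved to the left-hand side. The paper's diameter trick is shorter and needs strong convexity only once, but relies on the somewhat opaque existence of a ``far'' $g'$; your approach is more direct in that it works only with the three concrete points $A,B^\star,g^\star$, at the price of slightly more bookkeeping. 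Both yield the claimed $\varepsilon(\delta)\lesssim C_b^2\gamma^{-1}\log(2m/\delta)/n$.
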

The proof is deferred to
Appendix~\ref{sec:proof-of-lemma-midpoint-estimator-is-offset}.
An immediate consequence of the above lemma,
via an application of Theorem~\ref{thm:rademacher-bound}
(with $\delta_{1} = \delta_{2} = \delta/2$) and
Corollary~\ref{corollary:sparsity} is that for any $\delta \in (0,1)$ with
probability at least $1-\delta$ the following holds:
$$
  \mathcal{E}(\widehat{f}^{\text{(mid)}}_{\delta}, \mathcal{G})
  \lesssim \gamma^{-1}C_{b}^{2}\frac{\log(4m/\delta)}{n}.
$$
\subsection{Iterative Regularization}
\label{sec:mirror-descent-example}
The idea of iterative regularization is to apply some
optimization procedure to the \emph{unregularized} empirical risk function
$R_{n}(\cdot)$ and induce a regularizing effect by early stopping. Thus, the
number of iterations performed acts as a regularization parameter, in a similar
way that the size of penalty acts as a regularization parameter for penalized
procedures based on empirical risk minimization.
Iterative regularization schemes are actively studied since they have a built-in
warm-restart feature: obtaining a new model only costs one iteration of the
optimization algorithm, usually amounting to a gradient descent or stochastic
gradient descent update. In contrast, for explicitly penalized procedures,
obtaining new models (corresponding to different regularization parameters)
amount to solving a new optimization problem. Let us demonstrate an example
of how a general family of such algorithms fit into the framework of offset Rademacher complexity.

Let $\mathcal{X}$ be a compact subset of $\R^{d}$. In this section, we fix the
set of reference functions to be $\mathcal{G} = \{f_{w}(\cdot) = \langle w,
\cdot \rangle : w \in G \subset \R^{d}\}$, where the set $G$ is arbitrary.
Denote any population risk minimizer in $\mathcal{G}$ by $g^{\star} =
f_{w^{\star}}$, where $w^{\star} \in G$. Further, for any $w \in \R^{d}$, let
$R(w) = R(f_{w})$ and $R_{n}(f_{w}) = R_{n}(w)$.

We consider a family of mirror descent algorithms
\citep*{nemirovsky1983problem, beck2003mirror} that admit the more frequently
studied gradient descent procedure as a special case.
Let $\mathcal{D} \subseteq \R^{d}$ be an open and convex set.
Let $\psi : \mathcal{D} \to \R^{d}$ denote a continuously differentiable
strictly convex function whose gradient diverges
at the boundary of $\mathcal{D}$. We call such a function a \emph{mirror map}.
The associated \emph{Bregman divergence} $D_{\psi} : \mathcal{D} \times \mathcal{D}
\to \mathbb{R}$ is defined by $D_{\psi}(x,y) = \psi(x) - \psi(y) - \langle
\nabla \psi (y), x - y\rangle$; note that for any $x, y \in \mathcal{D}$ we
have $D_{\psi}(x, y) \geq 0$ due to the convexity of $\psi$.
In \emph{continuous-time}, the mirror descent algorithm is defined by the following
differential equation, where $t \geq 0$ is the time parameter:
\begin{equation}
  \label{eq:mirror-descent-flow}
  \frac{d}{dt}w_{t} = -\left(\nabla^{2}\psi(w_{t})\right)^{-1}\nabla
  R_{n}(w_{t}).
\end{equation}
We now present an argument due to
\citet*{vaskevicius2020statistical}, where it was shown that early-stopped
mirror descent algorithms satisfy the offset condition.
\begin{lemma}
  \label{lemma:mirror-descent}
  As defined above, let $\mathcal{G}$ be any reference class of linear functions
  and denote $g^{\star} = f_{w^{\star}}$. Let $\ell$ be a differentiable and
  $\gamma$-strongly convex loss function in its first argument (cf.
  Section~\ref{sec:model-selection-aggregation}).
  Fix an arbitrary initialization point
  $w_{0} \in \R^{d}$ and let $(w_{t})_{t > 0}$ be generated by
  the mirror descent flow \eqref{eq:mirror-descent-flow}. Then, for any
  $\varepsilon > 0$ there exists a
  (random) stopping time $t^{\star} = t^{\star}(S_{n}, w^{\star}, w_{0})$
  such that the following three deterministic conditions hold:
  \begin{enumerate}
    \item The stopping time satisfies the deterministic bound
      $t^{\star} \leq 2D_{\psi}(w^{\star}, w_{0})/\varepsilon$;
    \item The early-stopped iterate $w_{t^{\star}}$ satisfies
      $w_{t^{\star}} \in \{w \in \R^{d} : D_{\psi}(w^{\star}, w) \leq
      D_{\psi}(w^{\star}, w_{0}\}$;
    \item The estimator $\widehat{f} = f_{w_{t^{\star}}}$
      satisfies the $(\mathcal{G}, \ell, \varepsilon,
      \frac{\gamma}{2})$-deterministic
      offset condition.
  \end{enumerate}
\end{lemma}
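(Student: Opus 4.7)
The plan is to use the Bregman divergence $V(t) = D_\psi(w^\star, w_t)$ as a Lyapunov function along the mirror descent flow and to choose $t^\star$ as a first-hitting time for a threshold involving $\varepsilon$. First I compute $\frac{d}{dt} V(t)$: differentiating $D_\psi(w^\star, w_t) = \psi(w^\star) - \psi(w_t) - \langle \nabla \psi(w_t), w^\star - w_t \rangle$ and applying the flow~\eqref{eq:mirror-descent-flow}, the $\nabla\psi(w_t)$ terms cancel to leave $\frac{d}{dt} V(t) = -\langle \nabla^2\psi(w_t)\dot w_t, w^\star - w_t\rangle = \langle \nabla R_n(w_t), w^\star - w_t\rangle$.

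Next, I convert this identity into an offset-type inequality using strong convexity. Since $\ell(\cdot, y)$ is differentiable and $\gamma$-strongly convex in its first argument, composing with the linear forms $w \mapsto \langle w, X_i\rangle$ and averaging over $i$ yields, for all $u, v \in \R^d$,
$$
R_n(u) \geq R_n(v) + \langle \nabla R_n(v), u - v\rangle + \frac{\gamma}{2n}\sum_{i=1}^{n}\langle u - v, X_i\rangle^2.
$$
Setting $u = w^\star$, $v = w_t$ and substituting into the Lyapunov calculation gives the key differential inequality
$$
R_n(w_t) - R_n(w^\star) + \frac{\gamma}{2n}\sum_{i=1}^{n}\langle w_t - w^\star, X_i\rangle^2 \leq -\frac{d}{dt}V(t).
$$
I now take $t^\star$ to be the first time $t \geq 0$ at which the left-hand side is $\leq \varepsilon$; this is precisely the $(\mathcal{G}, \ell, \varepsilon, \gamma/2)$-deterministic offset condition applied to $\widehat{f} = f_{w_{t^\star}}$, giving item~(3). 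For all $t \in [0, t^\star)$ the left-hand side strictly exceeds $\varepsilon$, so the differential inequality forces $\frac{d}{dt}V(t) \leq -\varepsilon < 0$; thus $V$ is strictly decreasing on $[0, t^\star]$, which yields item~(2) via $V(t^\star) \leq V(0) = D_\psi(w^\star, w_0)$. Integrating the same rate estimate on $[0, t^\star]$ gives $\varepsilon t^\star \leq V(0) - V(t^\star) \leq D_\psi(w^\star, w_0)$, so $t^\star \leq D_\psi(w^\star, w_0)/\varepsilon \leq 2 D_\psi(w^\star, w_0)/\varepsilon$, which is item~(1).

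The only point requiring care is existence of a finite $t^\star$: if the threshold were never reached, the rate bound $\frac{d}{dt}V(t) \leq -\varepsilon$ would hold for all $t \geq 0$, forcing $V(t) \to -\infty$ and contradicting $V \geq 0$. Continuity of $V$ along the flow then guarantees that the infimum is attained, so $t^\star$ is a well-defined (deterministic, sample-dependent) stopping time at which items~(1)--(3) hold simultaneously. I do not anticipate any other serious difficulty, since all three conclusions are extracted from a single differential inequality by a standard potential-function argument.
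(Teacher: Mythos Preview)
Your proposal is correct and follows essentially the same approach as the paper: both compute $\frac{d}{dt}D_\psi(w^\star,w_t)=\langle\nabla R_n(w_t),w^\star-w_t\rangle$, use $\gamma$-strong convexity of $\ell$ to obtain the differential inequality $-\frac{d}{dt}D_\psi(w^\star,w_t)\ge\delta(t)$ with $\delta(t)$ the offset quantity, define $t^\star$ as the first time $\delta(t)\le\varepsilon$, and read off items~(1)--(3) by integration. Your write-up is in fact more careful about existence and finiteness of $t^\star$ than the paper's sketch, and you correctly note that the integration already gives $t^\star\le D_\psi(w^\star,w_0)/\varepsilon$, so the factor~$2$ in item~(1) is not needed.
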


\begin{proof}
  For any $t \geq 0$, let $\delta(t) = R_{n}(w_{t}) - R_{n}(w^{\star}) +
  \frac{\gamma}{2}\sum_{i=1}^{n}(f_{w_{t}}(X_{i}) - f_{w^{\star}}(X_{i}))^{2}$.
  Let $t^{\star} \coloneqq \inf\{t \geq 0 : \delta(t) \leq \varepsilon \}$
  A direct computation shows the following well-known identity:
  $-\frac{d}{dt}D_{\psi}(w^{\star}, w_{t}) = \langle - R_{n}(w_{t}), w^{\star} -
  w_{t}\rangle$. By the $\gamma$-strong convexity assumption, it hence follows
  that for any $t \geq 0$ we have $-\frac{d}{dt}D_{\psi}(w^{\star}, w_{t}) \geq
  \delta(t)$. Integrating both sides, it follows that the following infimum is
  well defined and it satisfies all the conditions of this theorem:
  $t^{\star} = \inf\{0 \leq t \leq 2D_{\psi}(w^{\star}, w_{0})/\varepsilon :
  \delta(t) \leq \varepsilon \}$.
\end{proof}

Observe that the above argument only involves the tools from convex
optimization, yet Theorem~\ref{thm:rademacher-bound} readily implies
probabilistic performance bounds for the estimator considered above.
Condition 1 in the above lemma establishes a
statistical-computational trade-off. Condition 2 determines the range of the
early-stopped estimator. Condition 3 shows that the early-stopped mirror
descent estimator can be analyzed via offset Rademacher complexities; indeed,
this is the only known approach for obtaining sharp guarantees for this general
class of iterative regularization schemes (see
\citep*{vaskevicius2020statistical} for further discussion and for
discrete-time results).
For more examples and further background on iterative
regularization, see, for example, \citep*{buhlmann2003boosting, yao2007early,
raskutti2014early, lin2016iterative, wei2019early}.


\section{Proof of Theorem~\ref{thm:rademacher-bound}}
\label{sec:proof-of-rademacher-bound}

Recall that $P$ denotes the underlying distribution of $(X,Y)$ and let
$P_{n}$ denote its empirical counterpart supported on the sample $S_{n}$ so
that
\begin{align*}
  &P\ell = \mathbf{E}_{(X,Y)\sim P}[\ell(X, Y)]\quad\text{and}\quad
  P_{n}\ell = \frac{1}{n}\sum_{i=1}^{n}\ell(X_{i}, Y_{i})
  \quad\text{for any function }\ell : \mathcal{X} \times \mathcal{Y} \to
  \mathbb{R};
  \\
  &Ph = \mathbf{E}_{X \sim P_{X}}[h(X)]\quad\text{and}\quad
  P_{n}h = \frac{1}{n} \sum_{i=1}^{n}h(X_{i})
  \quad\text{for any function }h : \mathcal{X} \to \mathbb{R}.
\end{align*}
With the above notation we have $R(f) = P\ell_{f}$ and $R_{n}(f) = P_{n}\ell_{f}$.
Denote the event
$$
  E_{\delta_{2}} = \{P_{n}\ell_{\widehat{f}} - P_{n}\ell_{g^{\star}} \leq -\gamma
  P_{n}(\widehat{f} - g^{\star})^{2} + \varepsilon(\delta_{2})\}
$$
Since $\widehat{f}$ satisfies the $(\mathcal{G}, \ell,
\varepsilon, \gamma)$-offset condition we have $\P(E_{\delta_{2}}) \geq 1 -
\delta_{2}$; on $E_{\delta_{2}}$ we have
\begin{align*}
  P\ell_{\widehat{f}} - P\ell_{g^{\star}}
  &=
  (P - P_{n})(\ell_{\widehat{f}} - \ell_{g^{\star}})
  + P_{n}(\ell_{\widehat{f}} - \ell_{g^{\star}})
  \\
  &\leq
  (P - P_{n})(\ell_{\widehat{f}} - \ell_{g^{\star}})
  - \gamma P_{n}(\widehat{f} - g^{\star})^{2} + \varepsilon(\delta_{2})
  \\
  &\leq
  \underbrace{\sup_{f \in \mathcal{F}}\left\{
    (P-P_{n})(\ell_{f} - \ell_{g^{\star}})
    - \gamma P_{n}(f - g^{\star})^{2}
  \right\}}_{\coloneqq Z}
  + \varepsilon(\delta_{2}).
\end{align*}
The rest of the proof  is structured as follows:
\begin{enumerate}
  \item We first symmetrize a suitably rearranged Laplace transform of the
    empirical offset process $Z$. Since for $\lambda \geq 0$ the map
    $x \mapsto e^{\lambda x}$ is convex and non-decreasing, this step of the
    proof follows via standard arguments.
  \item Next, we apply Talagrand's Contraction Lemma to the symmetrized
    offset empirical process. This step turns our process into a multiplier-type
    process of Proposition~\ref{prop:multiplier-concentration}.
  \item We conclude the proof via an application of
    Proposition~\ref{prop:multiplier-concentration}, which yields a
    Bernstein-type upper bound on the moment generating function of
    $Z - \mathfrak{R}^{\text{off}}_{n}(\operatorname{star}(\mathcal{H}), \gamma')$, for a
    suitably defined constant $\gamma' > 0$.
    The desired tail bound then follows via Markov's inequality.
\end{enumerate}

\begin{remark}
Our proof strategy is inspired by the work of \citet{lecue2014optimal}, where
symmetrization and contraction arguments are also performed on the Laplace
transform of the empirical process of interest. The contraction step is
needed there to make the corresponding complexity measure linear in the model
parameters so that the supremum over a convex hull is attained a vertex.
In contrast, we need to apply the contraction step to put us in the setting of
Proposition~\ref{prop:multiplier-concentration}.
\end{remark}

\paragraph{Symmetrization step.}
We begin by rewriting the random variable $Z$ as follows:
\begin{align}
  Z
  &=
  \sup_{f \in \mathcal{F}}\left\{
      (P - P_{n})\left(
        \ell_{f} - \ell_{g^{\star}}
      \right)
      - \gamma P_{n}(f - g^{\star})^{2}
  \right\}
  \\
  &=
  \sup_{f \in \mathcal{F}}\left\{
      (P - P_{n})\left(
        \ell_{f} - \ell_{g^{\star}}
        + \frac{3\gamma}{4}(f - g^{\star})^{2}
      \right)
      - \frac{\gamma}{4} P_{n}(f - g^{\star})^{2}
      - \frac{3\gamma}{4} P (f - g^{\star})^{2}
  \right\},
  \label{eq:symmetrization-step-first-rearrangement}
\end{align}
where in the last equation above we have added and subtracted
$(3\gamma/4) P (f - g^{\star})^{2}$. For any function $f \in \mathcal{F}$
introduce a shorthand notation
$$
  \phi_{f} : \mathcal{X} \times \mathcal{Y} \to \mathbb{R}
  \enskip\text{such that}\enskip
  \phi_{f}(X, Y) = \ell_{f}(X, Y) - \ell_{g^{\star}}(X, Y)
  + \frac{3\gamma}{4}(f(X) - g^{\star}(X))^{2}.
$$
Let $S_{n}' = (X_{i}', Y_{i}')_{i=1}^{n}$
denote an independent copy of $S_{n} = (X_{i}, Y_{i})_{i=1}^{n}$
and denote $\mathbf{E}'$ as a shorthand notation for expectation computed with
respect to $S_{n}'$ only, conditionally on all other random variables.
Let $P_{n}'$ denote a counterpart to $P_{n}$ with the sample $S_{n}$
replaced by $S_{n}'$.
Carrying on from equation \eqref{eq:symmetrization-step-first-rearrangement}
we can rewrite $Z$ as follows:
\begin{align}
  Z &= \sup_{f \in \mathcal{F}}\left\{
    (P - P_{n})\phi_{f}
      - \frac{\gamma}{4} P_{n}(f - g^{\star})^{2}
      - \frac{3\gamma}{4} P (f - g^{\star})^{2}
  \right\}
  \\
  &=
  \sup_{f \in \mathcal{F}}\left\{
    (P - P_{n})\phi_{f}
      - \frac{\gamma}{4} P_{n}(f - g^{\star})^{2}
      - \frac{\gamma}{4} P (f - g^{\star})^{2}
      - \frac{2\gamma}{4} P (f - g^{\star})^{2}
  \right\}
  \\
  \label{eq:pre-symmetrization-step}
  &=
  \sup_{f \in \mathcal{F}}\left\{
    (\mathbf{E}'P_{n}' - P_{n})\phi_{f}
      - \frac{\gamma}{4} P_{n}(f - g^{\star})^{2}
      - \frac{\gamma}{4} \mathbf{E}'P_{n}' (f - g^{\star})^{2}
      - \frac{2\gamma}{4} P (f - g^{\star})^{2}
  \right\}.
\end{align}
Observe that in the above equation we have left the term $(2\gamma/4)P(f -
g^{\star})$ unchanged. This is needed to put us in the setting of
Proposition~\ref{prop:multiplier-concentration}, as we shall see below.

Let us now introduce a sequence of $n$
independent Rademacher (symmetric and $\{\pm1\}$ valued) random variables
$\sigma_{i}$ and let $\mathbf{E}_{\sigma}$ denote expectation with $\sigma_{1},
\dots, \sigma_{n}$ only, conditionally on all other random variables.
Let $P_{n}^{\sigma}$ denote the symmetrized empirical measure
so that for any function $\ell : \mathcal{X} \times \mathcal{Y} \to \mathbb{R}$
and any function $h : \mathcal{X} \to \mathbb{R}$ we have
\begin{align}
  P_{n}^{\sigma}\ell = \frac{1}{n}\sum_{i=1}^{n}\sigma_{i}\ell(X_{i}, Y_{i})\quad
  \text{and}\quad
  P_{n}^{\sigma}h = \frac{1}{n}\sum_{i=1}^{n}\sigma_{i}h(X_{i}).
\end{align}
For $\lambda > 0$ the map $x \mapsto e^{\lambda x}$ is convex and
non-decreasing; hence, for any $\lambda > 0$, using the identity
\eqref{eq:pre-symmetrization-step}, we can proceed to symmetrize the Laplace transform of $Z$
as follows:
\begin{align}
  \mathbf{E}\exp(\lambda Z)
  &\leq
  \mathbf{E}\mathbf{E}'
  \exp\bigg(\lambda\sup_{f \in \mathcal{F}}\bigg\{
      (P_{n}' - P_{n})\phi_{f}
      - \frac{\gamma}{4}P_{n} (f - g^{\star})^{2}
      \\
      &\quad\quad
      - \frac{\gamma}{4}P_{n}' (f-g^{\star})^{2} -
      \frac{2\gamma}{4}P(f - g^{\star})^{2}
  \bigg\}\bigg)
  \\
  &\leq
  \mathbf{E}\mathbf{E}_{\sigma}
  \exp\left(2\lambda\sup_{f \in \mathcal{F}}\left\{
      P_{n}^{\sigma}\phi_{f}
      - \frac{\gamma}{4}P_{n} (f - g^{\star})^{2}
      - \frac{\gamma}{4}P (f - g^{\star})^{2}
  \right\}\right).
  \label{eq:rad-bound-symmetrized}
\end{align}
Notice that the above moment generating function is almost of the form
that can be bounded via Proposition~\ref{prop:multiplier-concentration}.
It remains to replace the term $P_{n}^{\sigma}\phi_{f}$ with a term
$\rho P_{n}^{\sigma}(f - g^{\star})$, for some constant
$\rho$. This is the aim of the contraction step of this proof, which follows
below.

\paragraph{Contraction step.}
Recall that by the assumptions of this theorem, there exists some constant
$C_{b}$ such that for any $f, f' \in \mathcal{F},  x \in \mathcal{X}, y \in \mathcal{Y}$
we have
$$
  \abs{\ell_{f}(x, y) - \ell_{f'}(x, y)}
  \leq C_{b}\abs{f(x) - f'(x)}.
$$
In particular, for any $f,f' \in \mathcal{F}$ and any $x \in \mathcal{X}, y \in
\mathcal{Y}$ we have
\begin{align}
  \abs{\phi_{f}(x, y) - \phi_{f'}(x,y)}
  &=
  \left|\ell_{f}(x, y) + \frac{3\gamma}{4}(f(x) - g^{\star}(x))^{2} -
  \ell_{f'}(x,y) - \frac{3\gamma}{4}(f'(x) - g^{\star}(x))\right|
  \\
  &\leq
  C_{b}\abs{
    f(x) - f'(x)
  }
  + \frac{3\gamma}{4}\abs{(f(x) - f'(x))(f(x) + f'(x) - 2g^{\star}(x))}
  \\
  &\leq
  (C_{b} + 3\gamma b)\abs{f(x) - f'(x)}
  \\
  &=
  (C_{b} + 3\gamma b)\abs{(f(x) - g^{\star}(x)) - (f'(x) - g^{\star}(x))}.
\end{align}
Hence, applying Talagrand's contraction inequality \citep[Theorem 4.12]{ledoux1991probability}
(conditionally on the sample $S_{n}$) with the set $T_{S_{n}}$ and contraction mappings
$\phi_{S_{n}}^{(i)}$:
\begin{align}
  T_{S_{n}} &=
  \{((f - g^{\star})(X_{1}), \dots, (f - g^{\star})(X_{n}))^{\mathsf{T}} : f \in \mathcal{H}\},
  \\
  \phi_{S_{n}}^{(i)}(t_{i}) &=
  \left(2C_{b} + 6\gamma b\right)^{-1}
  \cdot 2
  \left(
    \ell(t_{i} + g^{\star}(X_{i}), Y_{i}) - \ell_{g^{\star}}(X_{i}, Y_{i})
    - \frac{3\gamma}{4}t_{i}^{2}
  \right),
\end{align}
we may proceed upper bounding \eqref{eq:rad-bound-symmetrized} as follows
(cf.\ \citep*[Eq. (3.11)]{lecue2014optimal}):
\begin{align}
  &\mathbf{E}\exp\left(\lambda Z\right)
  \\
  &\leq
  \mathbf{E}\mathbf{E}_{\sigma}
  \exp\left(\lambda\sup_{f \in \mathcal{F}}\left\{
      P_{n}^{\sigma}2\phi_{f}
      - \frac{\gamma}{2}P_{n} (f - g^{\star})^{2}
      - \frac{\gamma}{2}P (f - g^{\star})^{2}
  \right\}\right)
  \\
  &\leq
  \mathbf{E}\mathbf{E}_{\sigma}
  \exp\left(\lambda\sup_{f \in \mathcal{F}}\left\{
      (2C_{b} + 6\gamma b)P_{n}^{\sigma}(f - g^{\star})
      - \frac{\gamma}{2}P_{n} (f - g^{\star})^{2}
      - \frac{\gamma}{2}P (f - g^{\star})^{2}
  \right\}\right)
  \\
  &=
  \mathbf{E}\mathbf{E}_{\sigma}
  \exp\left(\lambda\sup_{h \in \mathcal{H}}\left\{
      (2C_{b} + 6\gamma b)P_{n}^{\sigma}h
      - \frac{\gamma}{2}P_{n}h^{2}
      - \frac{\gamma}{2}Ph^{2}
  \right\}\right)
  \\
  &\leq
  \mathbf{E}\mathbf{E}_{\sigma}
  \exp\bigg(\frac{\lambda}{n}\cdot \underbrace{
      n\sup_{h \in \operatorname{star}(\mathcal{H})}\left\{
      (2C_{b} + 6\gamma b)P_{n}^{\sigma}h
      - \frac{\gamma}{2}P_{n}h^{2}
      - \frac{\gamma}{2}Ph^{2}
  \right\}}_{\coloneqq U}
  \bigg),
\end{align}
where in the penultimate line we introduced $\mathcal{H} = \{f - g^{\star} : f
\in \mathcal{F}\}$, and in the last step the inequality comes from replacing
$\mathcal{H}$ by $\operatorname{star}(\mathcal{H}) = \{\lambda h : h \in \mathcal{H}, \lambda
\in [0, 1]$\}.

We will now show that the random variable $U$ is a supremum of an offset multiplier
process satisfying the conditions of Proposition~\ref{prop:multiplier-concentration}.
Let $\zeta_{i} = (2C_{b} + 6\gamma b) \sigma_{i}$ and denote the
distribution of $\zeta$ by $P_{\zeta}$. Then, for any $h \in \mathcal{H}$ and
for $(X, \zeta)$ distributed
according to the product distribution $P_{X} \otimes P_{\zeta}$, we have
$\E[\zeta h(X)] = 0$. Therefore,
\begin{align}
  U &=
      n\cdot\sup_{h \in \operatorname{star}(\mathcal{H})}\left\{
      (2C_{b} + 6\gamma b)P_{n}^{\sigma}h
      - \frac{\gamma}{2}P_{n}h^{2}
      - \frac{\gamma}{2}Ph^{2}
    \right\}
  \\
  &= \sup_{h \in \operatorname{star}(\mathcal{H})}\left\{
      \sum_{i=1}^{n} \zeta_{i}h(X_{i}) -
      \E_{(X, \zeta) \sim P_{X} \otimes P_{\zeta}}[\zeta h(X)]
      - \frac{\gamma}{2} h(X_{i})^{2}
      - \frac{\gamma}{2} \E_{X \sim P_{X}} h(X)^{2}
    \right\}.
\end{align}
Hence, the moment generating function of the random variable $U$ can be bounded
via Proposition~\ref{prop:multiplier-concentration}, taking $P_{(X,\zeta)} =
P_{X} \otimes P_{\zeta}$.

\paragraph{Concluding the proof.}
Let $c_{3} > 0$ be some universal constant such that
\begin{equation}
  \eta = 8((2C_{b} + 6\gamma b)^{2}(\gamma/2)^{-1} + (\gamma/2)4b^2)
  \leq c_{3} (\gamma^{-1}C_{b}^{2} + bC_{b} + \gamma b^{2}).
\end{equation}
Relabelling $\lambda/n$ by $\lambda$
and applying Proposition~\ref{prop:multiplier-concentration}
to the random variable $U$, the following holds for any $\lambda \in (0,
1/\eta)$:
\begin{equation}
  \label{eq:rad-sub-gamma}
  \log \mathbf{E}\exp\left(\lambda((nZ) - \E\E_{\sigma}U)\right)
  \leq
  \log \mathbf{E}\E_{\sigma}\exp\left(\lambda(U - \E\E_{\sigma}U)\right)
  \leq
  \frac{\lambda^{2}\eta\E\mathbf{E}_{\sigma}U}{2(1 - \eta\lambda)}.
\end{equation}
The desired tail bound now follows via standard arguments that we sketch below.
By \citep*[Section 2.4]{boucheron2013concentration}, the upper bound
\eqref{eq:rad-sub-gamma} shows that the random variable
$nZ - \E\E_{\sigma}U$ is sub-gamma on the right-tail with variance proxy
$\eta \E\E_{\sigma} U$ and scale parameter $\eta$. Hence,
via Markov's inequality, for any $\delta_{1} \in (0,1]$ we have
\begin{equation}
  \P \left(
    nZ - \E\E_{\sigma}[U] \geq
    \sqrt{2\eta\E\E_{\sigma}[U]\log(\delta^{-1})}
    + \eta\log(\delta_{1}^{-1})
  \right)
  \leq \delta_{1}.
\end{equation}
Subtracting $\E\E_{\sigma}[U]$ from both sides of the inequality defining the
event inside $\P(\cdot)$ and optimizing the quadratic function in
$\sqrt{\E\E_{\sigma}[U]}$, we deduce that
\begin{align*}
  \delta_{1}
  &\geq
  \P \left(
    nZ - 2\E\E_{\sigma}[U] \geq \sqrt{2\eta\E\E_{\sigma}[U]\log(\delta_{1}^{-1})} -
    \E\E_{\sigma}[U'] + \eta \log(\delta_{1}^{-1})
  \right) \\
  &\geq
  \P \left(
    nZ - 2\E\E_{\sigma}[U] \geq \sup_{x\in\R}\left\{\sqrt{2\eta x\log(\delta^{-1})} -
    x^{2}\right\} + \eta\log(\delta_{1}^{-1})
  \right) \\
  &=
  \P \left(
    nZ - 2\E\E_{\sigma}[U] \geq (3/2) \eta \log(\delta_{1}^{-1})
  \right).
\end{align*}
Thus, denoting the event
$$
  E_{\delta_{1}} = \{
    nZ - 2\E\E_{\sigma}[U] \leq (3/2) \eta \log(\delta_{1}^{-1})
  \}
$$
we have $\P(E_{\delta_{1}}) \geq 1 - \delta_{1}$.
Finally, observe that
\begin{align}
  \mathbf{E}_{S_{n}}\mathbf{E}_{\sigma} U
  &= n(2C_{b} + 6\gamma b)
  \mathfrak{R}^{\text{off}}_{n}\left(P_{X}, \operatorname{star}(\mathcal{H}),
  \frac{\gamma}{2}\cdot(2C_{b} + 6\gamma b)^{-1}\right)
  \\
  &\leq 74 \cdot n(C_{b} + \gamma b)
  \mathfrak{R}^{\text{off}}_{n}\left(P_{X}, \operatorname{star}(\mathcal{H}),
  \gamma\cdot(C_{b} + \gamma b)^{-1}\right).
\end{align}
The desired result follows by the union bound on the events $E_{\delta_{1}}$
and $E_{\delta_{2}}$. $\hfill\qed$


\section{Proof of Proposition~\ref{prop:multiplier-concentration}}
\label{sec:proof-of-multiplier-proposition}

Let us first discuss the key insight into our proof.
Without loss of generality, assume that the supremum in
the definition of the random variable $U$ (cf.\ \eqref{eq:dfn-multiplier})
is always attained by some function, and denote this (random) function by
$\tilde{h} = \tilde{h}(S_{n})$. The following lemma shows that
the empirical and population $L_{2}$ norms of $\tilde{h}$ are upper bounded
by $c^{-1}U$. Thus, intuitively the supremum over
$\operatorname{star}(\mathcal{H})$ in the multiplier process is computed over
a ``self-localized'' (in a random/data-dependent way) subset of
$\operatorname{star}(\mathcal{H})$.
In contrast, we remark that the classical theory of localization via fixed-point
equations proceeds by localizing the function class
$\operatorname{star}(\mathcal{H})$ by constraining it to
an \emph{explicitly} chosen subset of functions with small $L_{2}$ population
or empirical norms.

\begin{lemma}
  \label{lemma:multiplier-self-localization}
  Consider the setting of Proposition~\ref{prop:multiplier-concentration} and
  let $\tilde{h} = \tilde{h}(S_{n})$ denote a random function that attains the
  supremum of the offset multiplier process $U$ (cf.\ \eqref{eq:dfn-multiplier})
  given the sample $S_{n} = (X_{i}, \zeta_{i})_{i=1}^{n}$. That is, $\tilde{h}$ satisfies
  \begin{equation}
    \label{eq:maximizer-of-multiplier-process}
    \sum_{i=1}^{n} \left(
      \zeta_{i}\tilde{h}(X_{i}) - \E [\zeta \tilde{h}(X) \vert S_{n}]
    - \gamma \tilde{h}(X_{i})^{2}
    - \gamma \E[\tilde{h}(X)^{2} \vert S_{n}]
    \right)
    = U(S_{n}).
  \end{equation}
  Then, the following deterministic
  inequality holds for any realization of $S_{n}$:
  \begin{equation}
    \label{eq:multiplier-minimizer-norms}
    \sum_{i=1}^{n}\left(\E[\tilde{h}(X)^{2}\vert S_{n}] + \tilde{h}(X_{i})^{2}\right)
    \leq \frac{1}{\gamma}U(S_{n}).
  \end{equation}
\end{lemma}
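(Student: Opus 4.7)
The plan is to exploit star-shapedness of the domain in a very concrete way: since the supremum in the definition of $U$ is taken over $\operatorname{star}(\mathcal{H})$, the entire segment $\{\alpha \tilde{h} : \alpha \in [0,1]\}$ is a feasible set of competitors for the maximizer $\tilde{h}$. Rather than invoking first-order (sub)differential optimality on the whole class (which would be complicated, as no structure on $\mathcal{H}$ is given), I would just restrict the objective to this one-dimensional ray and read off the inequality.

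Concretely, I would define
\begin{equation*}
  \phi(\alpha) := \sum_{i=1}^{n}\Big(
    \zeta_{i}(\alpha\tilde{h})(X_{i}) - \E[\zeta \,\alpha\tilde{h}(X)\mid S_n]
    - \gamma (\alpha\tilde{h}(X_{i}))^{2} - \gamma \E[(\alpha\tilde{h})(X)^{2}\mid S_n]
  \Big)
  = \alpha\, a - \alpha^{2}\, b,
\end{equation*}
where $a = \sum_i \zeta_i \tilde h(X_i) - n\E[\zeta\tilde h(X)\mid S_n]$ and $b = \gamma\sum_i \tilde h(X_i)^2 + n\gamma\E[\tilde h(X)^2\mid S_n] \geq 0$. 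By construction $\phi(1) = U(S_n)$, and by star-shapedness $\phi(\alpha) \leq U(S_n)$ for every $\alpha \in [0,1]$. Since $\phi$ is a concave quadratic (with $b\geq 0$), the fact that its maximum over $[0,1]$ is attained at the right endpoint forces the unconstrained maximizer $\alpha^\star = a/(2b)$ to satisfy $\alpha^\star \geq 1$, i.e. $a \geq 2b$ (the degenerate case $b=0$ corresponds to $\tilde h$ vanishing $P_X$-a.s.\ and on the sample, making the claim trivial).

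From $a\geq 2b$ we immediately get $U(S_n) = \phi(1) = a - b \geq b$, which rearranges to
\begin{equation*}
  \sum_{i=1}^{n}\Big(\E[\tilde h(X)^2 \mid S_n] + \tilde h(X_i)^2\Big)
  = \frac{b}{\gamma} \leq \frac{U(S_n)}{\gamma},
\end{equation*}
as required. There is no real obstacle here: the only non-trivial observation is that star-shapedness provides exactly the right feasible one-parameter family along which the offset structure ($-\gamma h^2$ terms) becomes a downward-opening quadratic, so that optimality at $\alpha = 1$ \emph{automatically} localizes the empirical and population norms of $\tilde h$ in terms of the attained value $U(S_n)$. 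This is the ``self-localization'' phenomenon highlighted in the preamble to the lemma, and it replaces the explicit fixed-point localization used in the classical theory.
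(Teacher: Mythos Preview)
Your argument is correct and takes essentially the same approach as the paper: both restrict the offset multiplier objective to the ray $\{\alpha\tilde h : \alpha\in[0,1]\}\subseteq\operatorname{star}(\mathcal H)$, exploit the linear--quadratic structure $\phi(\alpha)=\alpha A-\alpha^2 B$, and use optimality at $\alpha=1$ to conclude $B\le U(S_n)$. The only cosmetic difference is that the paper substitutes $A=U+B$ into $\phi(\lambda)\le U$ and rearranges to $\lambda B\le U$ for all $\lambda\in[0,1)$, whereas you read off the first-order condition $\alpha^\star=a/(2b)\ge 1$ of the concave quadratic; both routes are equivalent and yield the same bound.
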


\begin{proof}
  Fix any realization $S_{n} = (X_{i}, \zeta_{i})_{i=1}^{n}$ and in the rest of
  this proof we work conditionally on $S_{n}$.
  For any $h \in \operatorname{star}(\mathcal{H})$, define $A(h)$ and $B(h)$ as follows:
  \begin{equation}
    \label{eq:A-B-definition}
    A(h) =
    \sum_{i=1}^{n} \left(
      \zeta_{i}h(X_{i}) - \E [\zeta h(X) \vert S_{n}]
    \right),
    \quad
    B(h) =
    \gamma \sum_{i=1}\left(
      \E[h(X)^{2} \vert S_{n}] + h(X_{i})^{2}]
    \right).
  \end{equation}
  Thus, since $\tilde{h} = \tilde{h}(S_{n})$ denotes a maximizer of the offset
  multiplier process, we have
  \begin{equation}
    \label{eq:tilde-h-A-B-split}
    A(\tilde{h}) - B(\tilde{h}) = \sup_{h\in \operatorname{star}(\mathcal{H})}
    (A(h) - B(h)) = U(S_{n}).
  \end{equation}
  For any $\lambda \in [0, 1)$, let $\lambda h : x \mapsto \lambda h(x)$.
  Observe that for any $h$ and $\lambda$, the term
  $A(\lambda h)$ scales \emph{linearly} as a function of $\lambda$
  (i.e., $A(\lambda h) = \lambda A(h)$),
  while the term $B(\lambda h)$ scales \emph{quadratically}
  (i.e., $B(\lambda h) = \lambda^{2} B(h))$ as a function of $\lambda$.
  Fix any $\lambda \in [0,1)$ and note that by the definition of star-hulls,
  the function $\lambda \tilde{h}$ is in the set
  $\operatorname{star}(\mathcal{H})$. Therefore, the identity
  \eqref{eq:tilde-h-A-B-split} implies that
  \begin{equation}
    \label{eq:rescaled-A-B-tilde}
    \lambda A(\tilde{h}) - \lambda^{2}(B(\tilde{h}))
    = A(\lambda\tilde{h}) - B(\lambda\tilde{h})
    \leq \sup_{h \in \operatorname{star}(\mathcal{H})}(A(h) - B(h))
    = U(S_{n}).
  \end{equation}
  Rearranging the identity \eqref{eq:tilde-h-A-B-split} we also have
  $A(\tilde{h}) = U(S_{n}) + B(\tilde{h})$, which plugged into
  the left hand side of \eqref{eq:rescaled-A-B-tilde} yields
  \begin{equation}
    \lambda(1 - \lambda)B(\tilde{h}) \leq (1-\lambda)U(S_{n}).
  \end{equation}
  Dividing both sides by $(1-\lambda) > 0$ shows that $\lambda B(\tilde{h}) \leq
  U(S_{n})$. Since the last equation holds for any $\lambda \in [0, 1)$
  it follows that $B(\tilde{h}) \leq U(S_{n})$ which completes
  the proof of this lemma.
\end{proof}

With the above lemma in place, we are ready to prove
Proposition~\ref{prop:multiplier-concentration}. In the below proof,
we follow the standard approach for obtaining Bernstein-type concentration
bounds for the supremum of empirical processes
(see \citep*[Section 12.2]{boucheron2013concentration}).
In particular, such bounds often build on the entropy method, which in our case
appears through an application of the exponential Efron-Stein
inequality. For a survey of tail bounds on the supremum of empirical processes,
see the bibliographic remarks in \citep*[Section 12]{boucheron2013concentration}.
We now introduce some additional notation.

\begin{enumerate}
  \item Let $S_{n}^{(i)}$ be equal to the sample $S_{n}$ with the $i$-th
    element $(X_{i}, \zeta_{i})$ replaced by an independent copy
    $(X_{i}', \zeta_{i}') \sim P_{(X, \zeta)}$.
  \item For $i=1,\dots,n$, let $U_{i}' = U(S_{n}^{(i)}))$.
    Thus $U_{i}'$ is the supremum of the offset multiplier process
    computed on the sample $S_{n}^{(i)}$, which differs from $S_{n}$
    by the $i$-th sample only.
  \item Let $\E'[\cdot] = \E[\cdot \vert S_{n}]$ denote the expectation computed
    with respect to the random variables $(X_{i}', \zeta_{i}')$ only.
    In particular, we have $\E'[U] = U$.
\end{enumerate}

The exponential Efron-Stein inequality \citep*[Theorem
6.16]{boucheron2013concentration} asserts that for $\theta > 0$ and any
$\lambda \in (0, 1/\theta)$ we have
\begin{equation}
  \label{eq:exponential-efron-stein}
  \log \E e^{\lambda(U - \E U)}
  \leq \frac{\lambda \theta}{1 - \lambda \theta} \log \E e^{\lambda
  V^{+}/\theta},
  \quad\text{where}\quad
  V^{+} = \sum_{i=1}^{n} \E'[(U - U_{i}')_{+}^{2}].
\end{equation}
To complete the proof of Proposition~\ref{prop:multiplier-concentration}, it
remains to upper bound the random variable $V^{+}$. This will be achieved via a
combination of Lemma~\ref{lemma:multiplier-self-localization} and boundedness
assumptions on the function class $\mathcal{H}$ and the multipliers $\zeta$.
Indeed, let $\tilde{h} = \tilde{h}(S_{n})$ be a function that attains the
supremum in the definition of $U$ (cf.\
Lemma~\ref{lemma:multiplier-self-localization})
Then, evaluating the multiplier process defined on the sample $S^{(i)}_{n}$
with the function $\tilde{h}$ yields a lower bound on $U_{i}$.
Therefore, for $i = 1,\dots,n$ we have
$$
  U - U_{i}' \leq
    \zeta_{i}\tilde{h}(X_{i}) - \gamma \tilde{h}(X_{i})^{2}
    - \zeta_{i}'\tilde{h}(X_{i}') + \gamma \tilde{h}(X_{i}')^{2}
$$
and hence,
$$
  (U - U_{i}')_{+}^{2} \leq \left(
    \zeta_{i}\tilde{h}(X_{i}) - \gamma \tilde{h}(X_{i})^{2}
    - \zeta_{i}'\tilde{h}(X_{i}') + \gamma \tilde{h}(X_{i}')^{2}
  \right)^{2}.
$$
Noting that for any $a,b,c,d \in \R$ we have
$(a + b + c + d)^{2} \leq 4a^{2} + 4b^{2} + 4c^{2} + 4d^{2}$ (for example, by
the Cauchy-Schwarz inequality) it follows that
\begin{align*}
  \E'[(U - U_{i}')_{+}^{2}]
  &\leq
  4\E'[\zeta_{i}^{2}\tilde{h}(X_{i})^{2} + \gamma^{2}\tilde{h}(X_{i})^{4}
  + \zeta_{i}'^{2}\tilde{h}(X_{i}')^{2} + \gamma^{2}\tilde{h}(X_{i}')^{4}]
  \\
  &\leq 4\E'[(\sigma^{2} + \gamma^{2}\kappa^{2})(\tilde{h}(X_{i})^{2} +
  \tilde{h}(X_{i}')^{2})]
  \\
  &\leq 4(\sigma^{2} + \gamma^{2}\kappa^{2})(\tilde{h}(X_{i})^{2} +
  \E[\tilde{h}(X)^{2} \vert S_{n}]),
\end{align*}
where the second line follows by the boundedness assumptions and the last line
follows by noting that $\tilde{h}(X_{i})$ depends on $S_{n}$ only and
renaming $X_{i}'$ to $X$. Hence, we can now obtain an upper bound on $V^{+}$
defined in \eqref{eq:exponential-efron-stein} via
Lemma~\ref{lemma:multiplier-self-localization} as follows:
\begin{equation}
  0 \leq V^{+} \leq
  4(\sigma^{2} + \gamma^{2}\kappa^{2})
  \sum_{i=1}^{n}\left(
    \tilde{h}(X_{i})^{2} + \E[\tilde{h}(X)^{2}\vert S_{n}]
  \right)
  \leq
  4(\sigma^{2}\gamma^{-1} + \gamma\kappa^{2})U
\end{equation}
Plugging the above upper bound on $V^{+}$ into the exponential Efron-Stein
inequality \eqref{eq:exponential-efron-stein} with the choice $\theta =
4(\sigma^{2}\gamma^{-1} + \gamma\kappa^{2})$ yields,
for any $\lambda \in (0, 1/\theta)$:
\begin{align*}
  \log \E e^{\lambda(U - \E U)}
  \leq \frac{\lambda \theta}{1 - \lambda \theta} \log \E e^{\lambda U}
  = \frac{\lambda \theta}{1 - \lambda \theta} \left(
    \log \E e^{\lambda(U - \E U)}
    + \lambda \E U
  \right).
\end{align*}
Rearranging the above inequality, we obtain
\begin{equation}
  \frac{1 - 2\lambda\theta}{1 - \lambda \theta} \log \E e^{\lambda(U - \E U)}
  \leq \frac{\lambda^{2}\theta \E U}{1-\lambda \theta}.
\end{equation}
For any $\lambda \in (0, 1/(2\theta))$ we have $(1-2\lambda \theta)/(1-\lambda
\theta) > 0$, thus for $\lambda \in (0, 1/(2\theta))$ we have
\begin{equation}
  \label{eq:log-mgf-multiplier-upper-bound}
  \log \E e^{\lambda(U - \E U)}
  \leq \frac{\lambda^{2}\theta \E[U]}{1-2\lambda \theta}
  = \frac{\lambda^{2}(\eta\E U)}{2(1-\eta\lambda)},
\end{equation}
where $\eta = 2\theta$. This finishes our proof.$\hfill\qed$

\section*{Acknowledgments}
  Tomas Va\v{s}kevi\v{c}ius would like to thank Jaouad Mourtada and Nikita
  Zhivotovskiy for many discussions related to high probability excess risk
  bounds.

  Varun Kanade and Patrick Rebeschini are supported in part by the Alan Turing
  Institute under the EPSRC grant EP/N510129/1. Tomas Va\v{s}kevi\v{c}ius is
  supported by the EPSRC and MRC through the OxWaSP CDT programme (EP/L016710/1).

\bibliographystyle{plainnat}
{\footnotesize \bibliography{references}}

\appendix

\section{Deferred Proofs}
\label{appendix:deferred-proofs}

\subsection{Proof of Lemma~\ref{lemma:offset-localization-not-worse}}
\label{sec:proof-of-lemma-offset-not-worse}

  Fix any $\varepsilon > 0$ and let $\lambda = (1 + \varepsilon)^{-1} \in
  (0 ,1)$. Let $\lambda\mathcal{H} =
  \{\lambda h : h \in \mathcal{H}\}$ and observe that by the star-shapedness
  assumption we have $\lambda \mathcal{H} \subseteq \mathcal{H}$. It follows
  that
  \begin{align}
    \label{eq:offset-rad-rescaling}
    \mathfrak{R}^{\text{off}}_{n}(P_{X}, \mathcal{H}, \gamma)
    =
    \lambda^{-1} \mathfrak{R}^{\text{off}}_{n}(P_{X}, \lambda \mathcal{H}, \lambda^{-1} \gamma)
    \leq
    \lambda^{-1}
    \mathfrak{R}^{\text{off}}_{n}(P_{X}, \mathcal{H}, \lambda^{-1} \gamma).
  \end{align}
  We now proceed via a peeling argument.
  For any $r_{1} \geq 0, r_{2} > 0$ denote
  $\mathcal{H}(r_{1}, r_{2}) = \{ h \in \mathcal{H} : \E_{X \sim
  P_{X}}[h(X)^{2}] \in [r_{1}, r_{2}] \}.$
  Denote $\mathfrak{R}_{n}^{\text{loc}} = \mathfrak{R}_{n}^{\text{loc}}(P_{X}, \mathcal{H}, \gamma)$.
  Let $\mathcal{H}_{0} = \mathcal{H}(0, \gamma^{-1} \mathfrak{R}_{n}^{\text{loc}})$ and for
  $k=1, 2,\dots$, let
  $\mathcal{H}_{k} = \mathcal{H}(\lambda^{1-k}\gamma^{-1}\mathfrak{R}_{n}^{\text{loc}},
  \lambda^{-k}\gamma^{-1}\mathfrak{R}_{n}^{\text{loc}}) \cup \{h_{0}\}$, where $h_{0}$ denotes
  the identically zero function.
  Since $\mathcal{H} = \cup_{k \geq 0} \mathcal{H}_{k}$,
  by \eqref{eq:offset-rad-rescaling} we have
  \begin{equation}
    \label{eq:offset-rad-peeling}
    \mathfrak{R}^{\text{off}}_{n}(P_{X}, \mathcal{H}, \gamma)
    \leq \lambda^{-1}
    \sum_{k \geq 0} \mathfrak{R}^{\text{off}}_{n}(P_{X}, \mathcal{H}_{k}, \lambda^{-1}
    \gamma).
  \end{equation}
  Observe that by the definition of $\mathfrak{R}_{n}^{\text{loc}}$ (cf.\
  Definition~\ref{dfn:local-complexity}) we have
  $$
    \mathfrak{R}^{\text{off}}_{n}(P_{X}, \mathcal{H}_{0}, \lambda^{-1}\gamma)
    \leq
    \mathfrak{R}^{\text{off}}_{n}(P_{X}, \mathcal{H}_{0}, 0)
    \leq
    \mathfrak{R}_{n}^{\text{loc}}.
  $$
  At the same time, for any $k \geq 1$ we have $h_{0} \in \mathcal{H}$ and
  hence $\mathfrak{R}^{\text{off}}_{n}(P_{X}, \mathcal{H}_{k}, \lambda^{-1}\gamma) \geq 0$.
  Also, by \citep*[Lemmas 3.2 and 3.4]{bartlett2005local} we have
  $$
    \mathfrak{R}^{\text{off}}_{n}(P_{X}, \mathcal{H}(0,
    \lambda^{-k}\gamma^{-1}\mathfrak{R}_{n}^{\text{loc}}), 0) \leq
    \lambda^{-k}\mathfrak{R}_{n}^{\text{loc}}
  $$ and
  consequently,
  \begin{align*}
    0 &\leq \mathfrak{R}^{\text{off}}_{n}(P_{X}, \mathcal{H}_{k}, \lambda^{-1}\gamma)
    \leq \mathfrak{R}^{\text{off}}_{n}(P_{X}, \mathcal{H}_{k}, 0)
    - \lambda^{-1}\gamma \cdot \lambda^{1-k} \gamma^{-1} \mathfrak{R}_{n}^{\text{loc}}
    \\
    &= \mathfrak{R}^{\text{off}}_{n}(P_{X}, \mathcal{H}_{k}, 0)
    - \lambda^{-k}\mathfrak{R}_{n}^{\text{loc}}
    \leq \mathfrak{R}^{\text{off}}_{n}(P_{X}, \mathcal{H}(0,
    \lambda^{-k}\gamma^{-1}\mathfrak{R}_{n}^{\text{loc}}), 0)
      - \lambda^{-k}\mathfrak{R}_{n}^{\text{loc}}
      \leq 0.
  \end{align*}
  Hence, combining the above display equations,
  the inequality \eqref{eq:offset-rad-peeling} simplifies to
  $$
    \mathfrak{R}_{n}^{\text{off}}
    (P_{X}, \mathcal{H}, \gamma) \leq \lambda^{-1} \mathfrak{R}_{n}^{\text{loc}} = (1 +
    \varepsilon) \mathfrak{R}_{n}^{\text{loc}}.
  $$
  Since the choice of $\varepsilon > 0$ is
  arbitrary, our proof is complete.$\hfill\qed$

\subsection{Proof of Lemma~\ref{lemma:sparse-offset-complexity}}
\label{sec:proof-of-sparsity-lemma}

Let $\Phi \in \R^{n \times d}$ denote a matrix such that
$\Phi_{i,j} = (\Phi_{i})_{j}$ for any $i \in \{1, \dots, n\}$ and $j \in \{1,
\dots, d\}$. To simplify the notation let $\mathcal{F} =
\mathcal{F}_{\text{lin}}^{d,k}$.
For any $S \subseteq \{1, 2, \dots, d\}$, let $\Phi_{S} \in \mathbb{R}^{n
\times |S|}$ denote the matrix obtained by keeping only the columns of $\Phi$
indexed by the set $S$ and let $$\mathcal{S}^{d,k} = \{S \subseteq \{1, \dots,
d\} : |S| \leq k\}.$$ Observe that for any $\lambda > 0$ by Jensen's
inequality, the fact that $x \mapsto e^{\lambda x}$ is increasing, and
replacing maximum by a sum, we have
\begin{align}
  &n \mathfrak{R}^{\text{off}}(S_{n}^{\Phi}, \mathcal{F}, \gamma)
  \\
  &= \E_{\sigma} \sup_{\ip{w}{\cdot} \in \mathcal{F}}
  \left\{
    \sum_{i=1}^{n}\sigma_{i}\ip{w}{\Phi_{i}}
    - \gamma\ip{w}{\Phi_{i}}^{2}
  \right\}
  \\
  &= \E_{\sigma} \sup_{\ip{w}{\cdot} \in \mathcal{F}}
  \left\{
    \ip{\Phi w}{\sigma}
    - \gamma w^{\top}(\Phi^{\top}\Phi)w
  \right\}
  \\
  &=\E_{\sigma}
  \max_{S \in \mathcal{S}^{d,k}}
  \sup_{w \in \mathbb{R}^{|S|}}
  \left\{
    \ip{\Phi_{S} w}{\sigma}
    - \gamma w^{\top}(\Phi^{\top}_{S}\Phi_{S})w
  \right\}
  \\
  &\leq \frac{1}{\lambda} \log
  \E_{\sigma} \exp\left(\lambda
  \max_{S \in \mathcal{S}^{d,k}}
  \sup_{w \in \mathbb{R}^{|S|}}
  \left\{
    \ip{\Phi_{S} w}{\sigma}
    - \gamma w^{\top}(\Phi^{\top}_{S}\Phi_{S})w
  \right\}
  \right)
  \\
  &\leq \frac{1}{\lambda} \log
  \sum_{S \in \mathcal{S}^{d,k}}
  \E_{\sigma} \exp\left(\lambda
  \sup_{w \in \mathbb{R}^{|S|}}
  \left\{
    \ip{\Phi_{S} w}{\sigma}
    - \gamma w^{\top}(\Phi^{\top}_{S}\Phi_{S})w
  \right\}
  \right)
  \\
  &\leq \frac{1}{\lambda} \log \left(
  \left|\mathcal{S}^{d,k}\right|
  \max_{S \in \mathcal{S}^{d,k}}
  \E_{\sigma} \exp\left(\lambda
  \sup_{w \in \mathbb{R}^{|S|}}
  \left\{
    \ip{\Phi_{S} w}{\sigma}
    - \gamma w^{\top}(\Phi^{\top}_{S}\Phi_{S})w
  \right\}
  \right)
  \right).
  \label{eq:sparsity-bound-second-rearrangement}
\end{align}
We now proceed to upper bound the expectation inside the logarithm.
For any matrix $A$, denote its Moore-Penrose inverse by $A^{\dagger}$.
Fix any $S \in \mathcal{S}^{d,k}$.
For any vector $\sigma \in \R^{n}$, the vector $\Phi_{S}^{\top}\sigma$
belongs to the orthogonal complement of the null space of $\Phi_{S}^{\top}\Phi_{S}$.
Hence, following \citep*[Section 12, page 108]{rockafellar1970convex},
the following identity holds:
\begin{align*}
  \sup_{w \in \R^{|S|}}
  \left\{
    \ip{\Phi_{S} w}{\sigma}
    - \gamma w^{\top}(\Phi^{\top}_{S}\Phi_{S})w
  \right\}
  &=
  \sup_{w \in \R^{|S|}}
  \left\{
    \ip{w}{\Phi_{S}^{\top}\sigma}
    - \gamma w^{\top}(\Phi^{\top}_{S}\Phi_{S})w
  \right\}
  \\
  &= (4\gamma)^{-1}\sigma^{\top}
    \Phi_{S}(\Phi_{S}^{\top}\Phi_{S})^{\dagger}
    \Phi_{S}^{\top}\sigma.
\end{align*}
To simplify the notation, denote
by $H  = \Phi_{S}(\Phi_{S}^{\top}\Phi_{S})^{\dagger}\Phi_{S}^{\top}$ the
hat matrix, keeping the dependence on an arbitrary fixed $S \in
\mathcal{S}^{d,k}$ implicit.
By the above equation, it follows that
\begin{equation}
  \E_{\sigma} \exp\left(\lambda
  \sup_{w \in \mathbb{R}^{|S|}}
  \left\{
    \ip{\Phi_{S} w}{\sigma}
    - \gamma w^{\top}(\Phi^{\top}_{S}\Phi_{S})w
  \right\}
  \right)
  =
  \E_{\sigma}
  \exp\left(\frac{\lambda}{4\gamma}\sum_{i,j=1}^{n}\sigma_{i}\sigma_{j}H_{i,j}
  \right).
\end{equation}
We will now control the moment generating function of the
above Rademacher chaos by decoupling and comparison
with Gaussian chaos. Let
$\sigma' = (\sigma'_{1}, \dots, \sigma'_{n})^{\top}$ be an
independent copy of $\sigma$. Let $g = (g_{1}, \dots, g_{n})^{\top} \in
\mathbb{R}^{n}$ be a vector of independent standard Normal random variables
and let $g'$ be an independent copy of $g$. Then, for some universal constant $c_{1} >
0$ we have
\begin{align*}
  &\E_{\sigma}
  \exp\left(\frac{\lambda}{4\gamma}\sum_{i,j=1}^{n}\sigma_{i}\sigma_{j}H_{i,j}
  \right)
  \\
  &\leq
  \E_{\sigma, \sigma'}
  \exp\left(\frac{\lambda}{\gamma}\sum_{i,j=1}^{n}\sigma_{i}\sigma'_{j}H_{i,j}
  \right)
  &\text{\citep*[(Decoupling) Theorem 6.1.1]{vershynin2018high}}
  \\
  &\leq
  \E_{g, g'}
  \exp\left(\frac{c_{1}\lambda}{\gamma}\sum_{i,j=1}^{n}g_{i}g'_{j}H_{i,j}
  \right)
  &\text{\citep*[(Comparison) Lemma 6.2.3]{vershynin2018high}}.
\end{align*}
Let $\|\cdot\|_{\text{op}}$ denote the operator norm and let $\|\cdot\|_{F}$
denote the Frobenius norm. Then, by the Gaussian chaos moment generating function
bound \citep*[Lemma 6.2.2]{vershynin2018high}, there exist some universal
constants $c_{2},c_{3} > 0$ such that for any $\lambda \in (0,
\gamma c_{2}/\|H\|_{\text{op}}]$
we have
\begin{equation}
  \label{eq:final-chaos-bound}
  \E_{g, g'}
  \exp\left(\frac{c_{1}\lambda}{\gamma}\sum_{i,j=1}^{n}g_{i}g'_{j}H_{i,j}
  \right)
  \leq
  \exp\left(\frac{c_{3}\lambda^{2}}{\gamma^{2}}\|H\|_{F}^{2}
  \right).
\end{equation}
We will now plug in the above bound into
\eqref{eq:sparsity-bound-second-rearrangement}. Notice that the hat matrix
$H$ has at most $|S|$ non-zero eigenvalues, all of which are equal to $1$;
hence,$\|H\|_{op} = 1$ and $\|H\|_{F}^{2} \leq |S|$.
It follows that for any $\lambda \in (0, \gamma c_{2}]$ we have
\begin{align}
  \label{eq:rademacher-bound-without-optimized-lambda}
  \E_{\sigma} \sup_{w \in \mathbb{R}^{d}, \|w\|_{0} \leq k}
  \left\{
    \ip{\Phi w}{\sigma}
    - \gamma w^{\top}(\Phi^{\top}\Phi)w
  \right\}
  \leq
  \frac{1}{\lambda}\log | \mathcal{S}^{d,k} |
  + \frac{c_{3}\lambda k}{\gamma^{2}}.
\end{align}
Recalling the standard bound
$$
  |\mathcal{S}^{d,k}| = \sum_{i=1}^{k} \binom{d}{i}
  \leq \left(\frac{ed}{k}\right)^{k}
$$
and plugging in $\lambda = \gamma c_{2}$ in
\eqref{eq:rademacher-bound-without-optimized-lambda}
yields the desired result
$$
  n \mathfrak{R}^{\text{off}}(S_{n}^{\Phi}, \mathcal{F}, \gamma)
  \leq
  \frac{1}{\gamma}\left(
    c_{2}^{-1}k\log\frac{ed}{k}
    + c_{2}c_{3}k
  \right)
  \lesssim
  \frac{1}{\gamma}\log\left(\frac{ed}{k}\right)k.
$$
$\hfill\qed$

\subsection{Proof of Lemma~\ref{lemma:midpoint-estimator-is-offset}}
\label{sec:proof-of-lemma-midpoint-estimator-is-offset}

  For any $g, g' \in \mathcal{G}$ define the event
  \begin{align*}
    E(g,g') = \bigg\{
      R(g) - R(g')
      \leq R_{n}(g) - R_{n}(g') + c_{1}C_{b}d_{\delta, n}(g, g')
    \bigg\}.
  \end{align*}
  By the empirical Bernstein inequality \citep*[Thereom 11]{maurer2009empirical}
  applied to the random variables
  $(2bC_{b})^{-1}(\ell_{g}(X_{i}, Y_{i}) - \ell_{g'}(X_{i}, Y_{i}))$
  we have $\P(E(g,g')) \geq 1 - \delta/m^{2}$. Hence, defining the event $E =
  \cup_{g,g'\in \mathcal{G}} E(g,g')$, by the union bound $\P(E) \geq
  1-\delta$.

  We will now show that on the event $E$, the estimator
  $\widehat{f}^{\text{(mid)}}$ satisfies the offset condition.
  First observe that on the event
  $E(\widehat{f}^{\text{(ERM)}}, g^{\star}) \subseteq E$,
  the population risk minimizer $g^{\star}$ belongs to the set
  $\mathcal{G}_{\delta, c_{1}}(S_{n})$ of the empirical almost minimizers.
  Define the diameter
  $$
    D_{n}^{\text{max}} = \max_{g,g' \in \mathcal{G}_{\delta, c_{1}}(S_{n})}
    \|g - g'\|_{n}^{2},\quad\text{where}\quad
    \|g-g'\|_{n}^{2} = \frac{1}{n}\sum_{i=1}^{n}(g(X_{i}) - g'(X_{i}))^{2}.
  $$
  We may assume without loss of generality that $D_{n}^{\text{max}} > 0$ since
  otherwise the offset condition is trivially satisfied.
  Since $g^{\star} \in \mathcal{G}_{\delta, c_{1}}(S_{n})$, it follows that
  $\|\widehat{f}^{\text{(mid)}} - g^{\star}\|_{n}^{2} \leq D_{n}^{\text{max}}$.
  Also, since $D_{n}^{\text{max}} > 0$, there exists some function $g' \in
  \mathcal{G}_{\delta, c_{1}}(S_{n})$ such that $\|\widehat{f}^{\text{(ERM)}} - g'\|
  \geq D_{n}^{\text{max}}/4$. Hence, on the event $E$ it holds that
  \begin{align*}
    &R_{n}(\widehat{f}^{\text{(mid)}}) - R_{n}(g^{\star})
    \\
    &\leq
    R_{n}\left(\frac{\widehat{f}^{\text{(ERM)}} + g'}{2}\right) - R_{n}(g^{\star})
    \\
    &\leq
    \frac{1}{2}(R_{n}(\widehat{f}^{\text{(ERM)}}) - R_{n}(g^{\star}))
    + \frac{1}{2}(R_{n}(g') - R_{n}(g^{\star}))
    - \frac{\gamma}{32}D_{n}^{\text{max}},
    \\
    &\leq
    \left(\frac{1}{2}c_{1}C_{b}\sqrt{\frac{D_{n}^{\text{max}}\log(2m/\delta)}{n}}
    - \frac{\gamma}{64}D_{n}^{\text{max}}\right)
    + \frac{1}{2}c_{1}bC_{b}\frac{\log(2m/\delta)}{n}
    - \frac{\gamma}{64}D_{n}^{\text{max}},
    \\
    &\leq
    \left(4c_{1}^{2}C_{b}^{2}\gamma^{-1} + \frac{1}{2}c_{1}bC_{b}\right)
    \frac{\log(2m/\delta)}{n} - \frac{\gamma}{64}\|\widehat{f}^{\text{(mid)}} -
    g^{\star}\|_{n}^{2},
  \end{align*}
  where the third line follows by the strong convexity of the loss function;
  the fourth line follows by the fact that $g' \in \mathcal{G}_{\delta,
  c_{1}}(S_{n})$ and $R_{n}(\widehat{f}^{\text{(ERM)}}) - R_{n}(g^{\star}) \leq
  0$; the fifth line follows by
  optimizing the quadratic function in
  $\sqrt{D_{n}^{\text{max}}}$ in the brackets and replacing
  $D_{n}^{\text{max}}$ by $\|\widehat{f}^{\text{(mid)}} - g^{\star}\|_{n}^{2}$.
  By Remark~\ref{remark:lipschitz-parameter-size}, we have $bC_{b} \leq
  \gamma^{-1}C_{b}^{2}$ and thus our proof is complete.$\hfill\qed$

\end{document}